\let\@wraptoccontribs\wraptoccontribs
\title[Approximability and Reflexivity]{Approximable Triangulated categories and Reflexive DG-categories}
\author{Isambard Goodbody}
\address{Isambard Goodbody, School of Mathematics and Statistics, University of Glasgow, University
Place, Glasgow G12 8QQ}
\DeclareMathOperator{\rad}{rad}
\DeclareMathOperator{\Hom}{Hom}
\DeclareMathOperator{\RHom}{RHom}
\DeclareMathOperator{\thick}{thick}
\DeclareMathOperator{\Mod}{Mod}
\DeclareMathOperator{\fdmod}{mod}
\DeclareMathOperator{\perf}{perf}
\DeclareMathOperator{\Ext}{Ext}
\DeclareMathOperator{\fd}{fvd}
\DeclareMathOperator{\pvd}{pvd}
\DeclareMathOperator{\Qcoh}{Qcoh}
\DeclareMathOperator{\coh}{coh}
\DeclareMathOperator{\Hmo}{Hmo}
\begin{document}

\usetikzlibrary{matrix}
\usetikzlibrary{shapes}

\theoremstyle{plain}
\newtheorem{prop}{Proposition}[section]
\newtheorem{lemma}[prop]{Lemma}
\newtheorem{theorem}[prop]{Theorem}
\newtheorem{cor}[prop]{Corollary}
\newtheorem{conj}[prop]{Conjecture}

\theoremstyle{definition}
\newtheorem{defn}[prop]{Definition}
\newtheorem{ass}[prop]{Assumption}
\newtheorem{cons}[prop]{Construction}
\newtheorem{ex}[prop]{Example}
\newtheorem{remark}[prop]{Remark}
\newtheorem*{ack}{Acknowledgements}
\maketitle

\tikzset{
    vert/.style={anchor=south, rotate=90, inner sep=.5mm}
}

\newcommand{\rightarrowdbl}{\rightarrow\mathrel{\mkern-14mu}\rightarrow}

\newcommand{\xrightarrowdbl}[2][]{%
  \xrightarrow[#1]{#2}\mathrel{\mkern-14mu}\rightarrow
}

\begin{abstract}
We use the theory of approximable triangulated categories to give a condition for a proper DG-category to be reflexive in the sense of Kuznetsov and Shinder. To do this we provide another description of the completion of an approximable triangulated category under a properness assumption. We apply our results to proper schemes, proper connective DG-algebras and Azumaya algebras over proper schemes. We include an appendix by Raedschelders and Stevenson showing that proper connective DG-algebras admit finite dimensional models over any field.
\end{abstract}

\tableofcontents

\section{Introduction}

For a projective scheme $X$, there are two small derived categories one traditionally works with: the perfect complexes $\mathcal{D}^{\perf}(X)$ and the bounded derived category of coherent sheaves $\mathcal{D}^b(\coh X)$. Reflexive DG-categories and approximable triangulated categories are both tools for describing the relationship between these categories. The aim of this paper is to compare these two approaches. 
\\

The theory of approximable triangulated categories makes precise the idea that $\mathcal{D}^b(\coh X)$ is the completion of $\mathcal{D}^{\perf}(X)$ with respect to a metric (and vice versa). Whereas the the theory of reflexive DG-categories makes precise the idea that $\mathcal{D}^b(\coh X)$ is the dual of $\mathcal{D}^{\perf}(X)$ (and vice versa). Under a finiteness assumption, Neeman showed that approximable triangulated categories satisfy some representability theorems which are reminiscint of the definition of a reflexive DG-category. We clarify this connection and give a condition for a DG-category to be reflexive assuming approximability and a strong generation condition.
\\

As applications we show that proper schemes and Azumaya algebras over proper schemes are reflexive over any regular ring and we show that proper connective DG-algebras are reflexive over any field. We provide two proofs that the strong generation condition is satisfied in the latter example. One is direct and one is based on taking finite dimensional models using a theorem of Raedschelders and Stevenson. This generalises the key examples of Kuznetsov and Shinder. Unless stated otherwise, $k$ will denote a commutative Noetherian ring and $\fdmod k$ will denote the finitely generated $k$-modules.

\begin{ack}
I'm grateful to Timothy De Deyn, Fei Peng, Kabeer Manali Rahul,  and Greg Stevenson for valuable comments. I'd like to thank an anonymous reviewer for pointing out a simplification of the proof of Lemma 4.1 and Timothy De Deyn for pointing out a mistake in a previous version of Lemma 5.2. 
\end{ack}

\section{Approximable Triangulated categories}

Developed by Neeman, the theory of approximable triangulated categories has found many applications which 
are surveyed in \cite{Nee21a} and \cite{CNS24}. The main objects of study here are triangulated categories with coproducts and $t$-structures. In order to talk about approximable triangulated categories we need some notation on how to build objects. 

\begin{defn} Let $\mathcal{T}$ be a triangulated category with coproducts, $S \subseteq \mathcal{T}$ a subcategory, and let $[a,b] \subseteq \mathbb{Z}$ be an interval. 
\begin{enumerate}
\item Let $\langle S \rangle_1^{[a,b]} \subseteq \mathcal{T}$ consist of finite sums of $i$-th shifts of summands of objects of $S$ where $i \in [a,b]$. 

\item Let $\langle S \rangle_n^{[a,b]} \subseteq \mathcal{T}$ consist of all summands of objects $X$ which fit into a triangle
\[
X' \to X \to X'' \to^+
\]
with $X' \in \langle S \rangle_1^{[a,b]}$ and $X'' \in \langle S \rangle_{n-1}^{[a,b]}$.

\item Let $\langle S \rangle^{[a,b]}$ be the union over $n$ of all ${\langle S \rangle}^{[a,b]}_n$.

\item Let $\overline{\langle S \rangle_1}^{[a,b]}$ consist of all sums of $i$-th shifts of summands of objects of $S$ where $i \in [a,b]$.

\item Let $\overline{\langle S \rangle}_n^{[a,b]}$ consist of all summands of objects $X$ which fit into a triangle
\[
X' \to X \to X'' \to^+
\]
with $X' \in \overline{\langle S \rangle}_1^{[a,b]}$ and $X'' \in \overline{\langle S \rangle}_{n-1}^{[a,b]}$.

\end{enumerate}

If $[a,b] = \mathbb{Z}$ so that all shifts are allowed, we will omit the associated superscript. 

\end{defn}

\begin{defn} \label{approxdefn}
A triangulated category is approximable if it has a compact generator $G$ and a $t$-structure $(\mathcal{T}^{\leq 0}, \mathcal{T}^{\geq 0})$ and there is an integer $A >0$ such that 
\begin{enumerate}
\item $\Sigma^A G \in \mathcal{T}^{\leq 0}$
\item $\Hom_{\mathcal{T}}(\Sigma^{-A} G, E) = 0$ for all $E \in \mathcal{T}^{\leq 0}$
\item For every $F \in \mathcal{T}^{\leq 0}$ there is a triangle
\[
E \to F \to D \to^+
\]
with $D \in \mathcal{T}^{\leq -1}$ and $E \in \overline{\langle G \rangle}^{[-A,A]}_A$
\end{enumerate}
\end{defn}

\begin{remark}
By Proposition 2.6 in \cite{Nee21c}, approximability is a property only of the triangulated category and don't depend on a choice of $t$-structure or compact generator.
\end{remark}

\begin{defn} \label{tstructdef} Given a $t$-structure $(\mathcal{T}^{\leq 0}, \mathcal{T}^{\geq 0})$ on a triangulated category $\mathcal{T}$, we fix the following notation
\[
\mathcal{T}^{\leq n} := \Sigma^{-n} \mathcal{T}^{\leq 0}, \quad \mathcal{T}^{\geq n} := \Sigma^{-n} \mathcal{T}^{\geq 0}
\]
\[
\mathcal{T}^- := \bigcup_n \mathcal{T}^{\leq n}, \quad \mathcal{T}^+ := \bigcup_n \mathcal{T}^{\geq n},\quad \mathcal{T}^b := \mathcal{T}^- \cap \mathcal{T}^+
\]
Two $t$-structures $(\mathcal{T}^{\leq 0}_1,\mathcal{T}^{\geq 0}_1)$ and $(\mathcal{T}^{\leq 0}_2,\mathcal{T}^{\geq 0}_2)$ on $\mathcal{T}$ are equivalent if there is some $N \geq 0$ such that $\mathcal{T}^{\leq -N}_1 \subseteq \mathcal{T}^{\leq 0}_2 \subseteq \mathcal{T}^{\leq N}_1$. 
\end{defn}

\begin{remark}
By Theorem A.1 in \cite{AJS03}, any compact object in a triangulated category $\mathcal{T}$ with coproducts generates a $t$-structure. One can check that if $G$ and $G'$ are two compact generators of $\mathcal{T}$, then they generate equivalent $t$-structures. A $t$-structure is said to be in the preferred equivalence class if it is equivalent to one generated by a compact generator of $\mathcal{T}$. By Proposition 2.4 in \cite{Nee21c}, any $t$-structure on an approximable triangulated category $\mathcal{T}$ satisfying the conditions (1), (2) and (3) in Definition \ref{approxdefn}, is in the preferred equivalence class. 
\end{remark}

\begin{ex} \label{approxeg} The following examples appear in \cite{CNS24} and \cite{Nee21a}.
\begin{enumerate}
\item If $X$ is a quasi-compact quasi-separated scheme then $\mathcal{D}_{\Qcoh}(X)$ is approximable and the standard $t$-structure is in the preferred equivalence class.

\item If $\mathcal{T}$ is a triangulated category with a compact generator such that $\Hom_{\mathcal{T}}(G,\Sigma^i G) = 0$ for $i > 0$, then $\mathcal{T}$ is approximable. For example connective $E_1$ algebras are approximable and the standard $t$-structures are in the preferred equivalence class.
\end{enumerate}
\end{ex}

The following is the non-commutative version of $\mathcal{D}^b_{\coh}$ associated to an approximable triangulated category.

\begin{defn} Let $\mathcal{T}$ be an approximable triangulated category and $(\mathcal{T}^{\leq 0}, \mathcal{T}^{\geq 0})$ a $t$-structure in the preferred equivalence class.
\begin{enumerate}

\item Let $\mathcal{T}_c^-$ consist of all objects $F \in \mathcal{T}$ such that for any $n >0$ there is a triangle
\[
E \to F \to D \to^+
\]
with $E \in \mathcal{T}^c$ and $D \in  \mathcal{T}^{\leq -n}$.

\item Let $\mathcal{T}^b_c = \mathcal{T}^{-}_c \cap \mathcal{T}^b$.
\end{enumerate}
\end{defn}

\begin{remark}
The subcategories $\mathcal{T}_c^{-}$ and $\mathcal{T}^b_c$ are thick and only depend on $\mathcal{T}$ and not on a choice of $t$-structure in the preferred equivalence class. See Lemma 2.10 in \cite{Nee21c} and Remark 8.7 in \cite{Nee21a}.
\end{remark}

\begin{ex} \noindent
\begin{enumerate}
\item For a quasi-compact quasi-separated scheme $X$, $\mathcal{D}_{\Qcoh}(X)^b_c$ is the category of pseudocoherent complexes with bounded cohomology. So if $X$ is Noetherian $\mathcal{D}_{\Qcoh}(X)^b_c = \mathcal{D}^b_{\coh}(X)$. See the discussion after Proposition 8.10 in \cite{Nee21a}.

\item Suppose $A$ is a connective $E_1$-algebra such that $\pi_0(A)$ is a coherent ring and $\pi_n(A)$ is a finitely presented $\pi_0(A)$ module for all $n \in \mathbb{Z}$. Theorem A.5 in \cite{BCRPZ24} shows that $\mathcal{D}(A)^b_c$ consists of all modules $M$ such that $\pi_n(M)$ is finitely presented over $\pi_0(A)$ for all $n \in \mathbb{Z}$ and only finitely many are non-zero.
\end{enumerate}
\end{ex}

We will use the following representability theorems which hold for approximable triangulated categories. Recall $k$ is a commutative Noetherian ring.

\begin{defn} 
For a $k$-linear triangulated category $\mathcal{T}$, a finite homological functor is a $k$-linear homological functor $F: \mathcal{T} \to \Mod k$ such that $\bigoplus_i F(\Sigma^i t) \in \fdmod k$ for every $t \in \mathcal{T}$.
\end{defn}

\begin{theorem}[Theorem 7.20 in \cite{Nee21c} and Theorem 4.7 in \cite{Nee18b}] \label{ApproxRep}
Let $\mathcal{T}$ be an approximable triangulated category such that $\Hom_{\mathcal{T}}(X,Y)$ is a finitely generated $k$-module for all $X,Y \in \mathcal{T}^c$.

\begin{enumerate}
\item 
A $k$-linear homological functor $F: (\mathcal{T}^c)^{op} \to \Mod k$ is finite if and only if it is isomorphic to $\Hom_{\mathcal{T}}(-,M)$ for some $M \in \mathcal{T}^b_c$.

\item Suppose there is some $G \in \mathcal{T}^b_c$ and $N > 0$ such that $\mathcal{T} = \overline{\langle G \rangle}_N$. Then a $k$-linear homological functor $F: \mathcal{T}^b_c \to \Mod k$ is finite if and only if it is isomorphic to $\Hom_{\mathcal{T}}(M,-)$ for some $M \in \mathcal{T}^c$.
\end{enumerate}
\end{theorem}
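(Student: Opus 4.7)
The plan is to adapt Brown-style representability to the approximable setting, following Neeman, using the approximation triangles of Definition \ref{approxdefn} to bridge algebraic finiteness over $k$ with the $t$-structural boundedness that separates $\mathcal{T}^b_c$ from $\mathcal{T}^c$.

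For part (1), the \emph{if} direction falls out of the definition of $\mathcal{T}^b_c$: fix $X \in \mathcal{T}^c$ and, for each $n$, take a triangle $E \to M \to D \to^+$ with $E \in \mathcal{T}^c$ and $D \in \mathcal{T}^{\leq -n}$. Since $M \in \mathcal{T}^b$ and $X$ is compact, $\Hom_{\mathcal{T}}(X, \Sigma^i M)$ vanishes for $i \ll 0$, and for $i$ in the remaining range it agrees with $\Hom_{\mathcal{T}}(X, \Sigma^i E)$ once $n$ is chosen large enough; this last group is finitely generated by the standing hypothesis, so $\bigoplus_i \Hom_{\mathcal{T}}(X, \Sigma^i M) \in \fdmod k$. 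For the \emph{only if} direction I would extend $F \colon (\mathcal{T}^c)^{op} \to \Mod k$ to a homological functor on all of $\mathcal{T}^{op}$ and apply Brown representability to produce some $M \in \mathcal{T}$ with $F \cong \Hom_{\mathcal{T}}(-, M)$; the remaining work is to show $M \in \mathcal{T}^b_c$, which one does by inductively constructing the required triangles $E_n \to M \to D_n \to^+$, using the finiteness of $F$ to guarantee that the $E_n$ can be chosen compact and that $D_n$ can be pushed arbitrarily far into $\mathcal{T}^{\leq -n}$.

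For part (2), the strong generation hypothesis $\mathcal{T} = \overline{\langle G \rangle}_N$ with $G \in \mathcal{T}^b_c$ and uniform $N$ makes $\mathcal{T}^b_c$ behave as a kind of dual of $\mathcal{T}^c$: every object of $\mathcal{T}^b_c$ is an $N$-fold extension of shifts of summands of $G$, so a finite homological functor on $\mathcal{T}^b_c$ is determined up to finite data by its values on shifts of $G$. Mirroring this bounded construction, I would inductively assemble a representing object $M \in \mathcal{T}^c$ as an $N$-step extension whose corepresentation matches $F$ on shifts of $G$, and then extend this identification to all of $\mathcal{T}^b_c$ by naturality with respect to the $N$-step resolutions. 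The uniform bound is precisely what ensures that the assembly terminates in a compact object rather than in some filtered colimit.

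The main obstacle in both parts is the simultaneous control of two finiteness conditions: the $k$-linear finiteness needed for Brown representability to produce a representing object at all, and the $t$-structural finiteness needed for that object to land in $\mathcal{T}^b_c$ in (1), or in $\mathcal{T}^c$ in (2). The approximation triangles of Definition \ref{approxdefn}(3) and those defining $\mathcal{T}^b_c$ are what mediate between the two, but keeping track of the interval of shifts and the number of cones throughout the induction is the delicate point; in (2) the uniform bound $N$ tames this bookkeeping, whereas in (1) one relies on the iterative nature of approximability itself.
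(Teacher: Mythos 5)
First, note that the paper does not prove this statement at all: it is imported verbatim from Neeman (Theorem 7.20 of \cite{Nee21c} and Theorem 4.7 of \cite{Nee18b}), so there is no internal proof to compare against --- only the citation. Your ``if'' direction of (1) is fine and is essentially the observation the paper itself uses in Lemma \ref{keylemma} ($\mathcal{T}^b_c \subseteq \mathcal{T}^{rhf}$): compactness of $X$ plus the defining triangles $E \to M \to D$ with $D \in \mathcal{T}^{\leq -n}$ reduce everything to $\Hom_{\mathcal{T}}(X,\Sigma^i E)$ with $E$ compact.

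The rest is an outline that defers precisely the steps that constitute the theorem. For the ``only if'' of (1), Brown representability does not apply to a functor defined only on $(\mathcal{T}^c)^{op}$; you must first extend $F$ to a product-preserving homological functor on $\mathcal{T}^{op}$, and it is not automatic that such an extension exists or is homological. Neeman's actual route is different: he uses that a finite homological functor on $(\mathcal{T}^c)^{op}$, with $k$ Noetherian, can be written as the colimit of a \emph{sequence} of representables $\Hom(-,E_n)$ with $E_n$ compact forming an approximating system, and then takes a homotopy colimit; approximability (condition (3) of Definition \ref{approxdefn}) is what lets one arrange the connecting maps so that the cones land deeper and deeper in $\mathcal{T}^{\leq -n}$, which is exactly the membership $M \in \mathcal{T}^-_c$ you assert can be arranged ``using the finiteness of $F$''. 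As written, that sentence assumes the conclusion; the finiteness of $F$ over a Noetherian base and the approximation property both have to be invoked at specific points, and neither appears in your argument. For (2) the gap is more serious: $F$ is covariant, so there is no co-Brown representability to fall back on, and $\mathcal{T}^b_c$ is not compactly generated; the statement that $M$ can be ``inductively assembled as an $N$-step extension whose corepresentation matches $F$ on shifts of $G$'' is a restatement of what must be proved, not a construction. Neeman's proof of this half is the hardest part of the cited theorems and uses the uniform bound $N$ in a genuinely quantitative way (to bound the length of the extensions producing $M$ and to control the kernels and images of the comparison maps), not merely as ``bookkeeping''. In short: the skeleton is pointed in the right direction, but every load-bearing step is left as an assertion.
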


\begin{remark}
We won't use it here but Theorem 7.20 in \cite{Nee21c} and Theorem 4.7 in \cite{Nee18b} also make statements about representability of morphisms between finite functors on $\mathcal{T}^b_c$ and $\mathcal{T}^c$. 
\end{remark}

\section{Reflexive DG-categories}
We recall some facts about reflexive DG-categories as introduced in \cite{KS22}. By a DG-category we mean a category enriched in chain complexes over a commutative Noetherian ring $k$. See \cite{Kel06} for a survey of DG-categories. If $\mathcal{A}$ is a small DG-category over $k$, we let $\mathcal{D}(\mathcal{A})$ denote its derived category (of right modules), $\mathcal{D}^{\perf}(\mathcal{A})$ the compact objects in $\mathcal{D}(\mathcal{A})$ and $\mathcal{D}_{\fd}(\mathcal{A})$ the subcategory of $\mathcal{D}(\mathcal{A})$ consisting of finitely valued DG-modules $M$ i.e.\ $M(a) \in \mathcal{D}^b(\fdmod k)$ for each $a$. We also let $\mathcal{D}_{\pvd}(\mathcal{A})$ denote the subcategory of $\mathcal{D}(\mathcal{A})$ consisting of perfectly valued DG-modules $M$ i.e.\ $M(a) \in \mathcal{D}^{\perf}(k)$ for each $a$. We view $\mathcal{D}(\mathcal{A})$, $\mathcal{D}^{\perf}(\mathcal{A})$, $\mathcal{D}_{\fd}(\mathcal{A})$ and $\mathcal{D}_{\pvd}(\mathcal{A})$ as DG-categories themselves with their natural enhancements. A DG-category $\mathcal{A}$ is locally finite if $\mathcal{A}(a,b) \in \mathcal{D}^b(\fdmod k)$, and proper if $\mathcal{A}(a,b) \in \mathcal{D}^{\perf}(k)$.

\begin{defn} \label{refldefn}
A small DG-category $\mathcal{A}$ is Morita-reflexive if the map 
\[
\mathcal{D}^{\perf}(\mathcal{A}) \to \mathcal{D}_{\pvd}(\mathcal{D}_{\pvd}(\mathcal{A})^{op})^{op}; M \mapsto \RHom_{\mathcal{A}}(M,-)
\]
is an equivalence. A small DG-category $\mathcal{A}$ is finite-reflexive if the map 
\[
\mathcal{D}^{\perf}(\mathcal{A}) \to \mathcal{D}_{\fd}(\mathcal{D}_{\fd}(\mathcal{A})^{op})^{op}; M \mapsto \RHom_{\mathcal{A}}(M,-)
\]
is an equivalence. 
\end{defn}

\begin{remark}
\begin{enumerate}
\item To justify the name Morita-reflexive we note that in the closed symmetric monoidal category $\Hmo$ of small DG-categories localised at Morita equivalences, the (weak) dual of $\mathcal{A}$ is $\mathcal{D}_{\pvd}(\mathcal{A})$. 

\item If $k$ is regular, then $\mathcal{D}^b(\fdmod k) = \mathcal{D}^{\perf}(k)$ and $\mathcal{A}$ is Morita-reflexive if and only if it is finite-reflexive. In this case we simply say $\mathcal{A}$ is reflexive.

\end{enumerate}
\end{remark}

\begin{ex} \noindent \label{refleg} 
\begin{enumerate}
\item In Lemma 3.14 of \cite{KS22}, it is shown that if $\mathcal{A}$ is a proper DG-category over a field then it is semi-reflexive. i.e.\ the evaluation map in Definition \ref{refldefn} is fully faithful. The same proof shows that if $\mathcal{A}$ is locally finite over a commutative Noetherian ring $k$, then the map 
\[
\mathcal{D}^{\perf}(\mathcal{A}) \to \mathcal{D}_{\fd}(\mathcal{D}_{\fd}(\mathcal{A})^{op})^{op}
\]
is fully faithful.

\item If $X$ is a projective scheme over a perfect field then it is shown in Proposition 6.1 of \cite{KS22}, that $\mathcal{D}^{\perf}(X)$ is reflexive and $\mathcal{D}_{\pvd}(\mathcal{D}^{\perf}(X)) \simeq \mathcal{D}^b_{\coh}(X)$.

\item If $A$ is a proper connective DG-algebra ($H^\ast(A)$ vanishes in positive degrees) over a perfect field then by Proposition 6.9 of \cite{KS22},  $\mathcal{D}^{\perf}(A)$ is reflexive. 

\item In \cite{BNP13}, it is shown that if $X$ is a proper algebraic space over a field of characteristic zero then $\mathcal{D}^{\perf}(X)$ is reflexive and $\mathcal{D}_{\pvd}(\mathcal{D}^{\perf}(X)) \simeq \mathcal{D}^b_{\coh}(X)$. 
\end{enumerate}
\end{ex}

\begin{remark} \noindent For this remark suppose $k$ is a field. Theorem 3.17 in \cite{KS22} states that there is a bijection between the semi-orthogonal decompositions of $\mathcal{D}^{\perf}(\mathcal{A})$ and $\mathcal{D}_{\pvd}(\mathcal{A})$ for a reflexive DG-category $\mathcal{A}$. By Corollary 3.16 in loc.\ cit.\ they have the same triangulated autoequivalence groups. In \cite{Goo24}, it was shown that reflexive DG-categories are the reflexive objects in the closed symmetric monoidal category $\Hmo$ constructed in \cite{toe06}. It follows that $\mathcal{D}^{\perf}(\mathcal{A})$ and $\mathcal{D}_{\pvd}(\mathcal{A})$ have the same Hochschild cohomologies and derived Picard groups.  
\end{remark}

\begin{defn}
We say a small DG-category $\mathcal{A}$ is approximable if  $\mathcal{D}(\mathcal{A})$ is an approximable triangulated category.
\end{defn}

\begin{remark} Since approximable triangulated categories admit a single compact generator, any approximable DG-category is Morita equivalent to a DG-algebra. 
\end{remark}
 
\section{Reflexivity via Approximability}

In this section we identify the completion of an approximable locally finite DG-category and use this to show that finite-reflexivity holds under a strong generation assumption.

\begin{lemma} \label{keylemma}
Suppose $\mathcal{T}$ is a $k$-linear approximable triangulated category and $\Hom_{\mathcal{T}}(X,Y) \in \fdmod k$ for all $X,Y \in \mathcal{T}^c$. Then 
\[
\mathcal{T}^b_c = \left\{t \in \mathcal{T} \mid \bigoplus_{i} \Hom_\mathcal{T}(c,\Sigma^i t) \in \fdmod k \text{ for any } c \in \mathcal{T}^c \right\}
\]
\end{lemma}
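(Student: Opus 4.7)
The lemma is a two-way inclusion, and I would prove each direction separately using Theorem \ref{ApproxRep}(1), which identifies $\mathcal{T}^b_c$ with the representing objects of finite homological functors on $\mathcal{T}^c$.

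In one direction, given $M \in \mathcal{T}^b_c$, Theorem \ref{ApproxRep}(1) asserts that $\Hom_{\mathcal{T}}(-, M)|_{\mathcal{T}^c}$ is a finite homological functor, i.e.\ $\bigoplus_i \Hom_{\mathcal{T}}(\Sigma^i c, M) \in \fdmod k$ for all $c \in \mathcal{T}^c$. The suspension identity $\Hom_{\mathcal{T}}(\Sigma^i c, M) = \Hom_{\mathcal{T}}(c, \Sigma^{-i} M)$ and reindexing $i \mapsto -i$ rewrite this as $\bigoplus_i \Hom_{\mathcal{T}}(c, \Sigma^i M) \in \fdmod k$, matching the right-hand side.

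For the reverse containment, suppose $t \in \mathcal{T}$ satisfies the finite-$\Hom$ hypothesis. The same suspension identity shows that $\Hom_{\mathcal{T}}(-, t)|_{\mathcal{T}^c}$ is a finite homological functor, so Theorem \ref{ApproxRep}(1) supplies an $M \in \mathcal{T}^b_c$ together with a natural isomorphism $\alpha \colon \Hom_{\mathcal{T}}(-, M)|_{\mathcal{T}^c} \xrightarrow{\sim} \Hom_{\mathcal{T}}(-, t)|_{\mathcal{T}^c}$. Since $\mathcal{T}^b_c$ is thick, hence closed under isomorphism, it suffices to show $t \cong M$ in $\mathcal{T}$. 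I would realise $\alpha$ by an actual morphism $\phi \colon M \to t$ using the compact approximations defining $M \in \mathcal{T}^-_c$: choose a tower of triangles $E_n \xrightarrow{f_n} M \to D_n$ with $E_n \in \mathcal{T}^c$ and $D_n \in \mathcal{T}^{\leq -n}$, put $h_n := \alpha_{E_n}(f_n) \colon E_n \to t$, check that the $h_n$ are compatible along the tower via naturality of $\alpha$, and assemble them into $\phi$ using that $M$ is a homotopy colimit of the tower. Once $\phi$ is in hand, $\Cone(\phi)$ receives no nonzero map from any compact by a long-exact-sequence comparison applied through $\alpha$, so it vanishes by compact generation and $\phi$ is an isomorphism.

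The main obstacle is this coherent assembly of $\phi$. A priori $\alpha$ lives only in the functor category, so phantom-map-type phenomena could obstruct a lift to $\Hom_{\mathcal{T}}(M, t)$, and the $\lim^1$ term in the assembly must be controlled using that the tower cones lie arbitrarily deep in $\mathcal{T}^{\leq -n}$ together with a cohomological bound on $t$ obtained from the hypothesis (for instance $\Hom_{\mathcal{T}}(G, \Sigma^i t) = 0$ for $|i|$ large, which follows from the finite-$\Hom$ condition specialised to $c = G$ since $k$ is Noetherian). This step is conceptually close to the morphism-representability refinements of Theorem \ref{ApproxRep} mentioned in the remark following it, and I would aim to sidestep invoking them by exploiting the explicit $\mathcal{T}^-_c$-tower for $M$.
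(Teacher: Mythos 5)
Your proof follows the same route as the paper's: the forward inclusion via Theorem \ref{ApproxRep}(1), and the reverse inclusion by representing $\Hom_{\mathcal{T}}(-,t)|_{\mathcal{T}^c}$ by some $M \in \mathcal{T}^b_c$, lifting the natural isomorphism to an honest morphism $M \to t$, and killing its cone by compact generation. The one step you leave as an acknowledged sketch --- assembling $\phi$ from the approximating tower and controlling the phantom/$\lim^1$ obstruction --- is exactly what the paper discharges by citation: Lemma 7.5(ii) of \cite{Nee21c} shows $M$ admits a strong $\mathcal{T}^c$-approximating system, Lemma 5.8 of loc.\ cit.\ then produces the lift $\hat\alpha\colon M \to t$, and the paper first places $t$ in $\mathcal{T}^b$ via Lemma 3.9 of \cite{BNP}, for which your observation that $\Hom_{\mathcal{T}}(G,\Sigma^i t)$ vanishes for $|i|\gg 0$ is precisely the required input.
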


\begin{proof}
Let $\mathcal{T}^{rhf}$ denote the right hand side. By Theorem \ref{ApproxRep} (1) (or directly), we see that $\mathcal{T}^b_c \subseteq \mathcal{T}^{rhf}$. Let $G$ be a compact generator of $\mathcal{T}$ and $(\mathcal{T}^{\leq 0},\mathcal{T}^{\geq 0})$ the $t$-structure it generates. Then $(\mathcal{T}^{\leq 0},\mathcal{T}^{\geq 0})$ is in the preferred equivalence class and we may use it to define $\mathcal{T}^b_c$. If $X$ is contained in $\mathcal{T}^{rhf}$, then $\Hom_{\mathcal{T}}(c,\Sigma^i X)$ vanishes for $\lvert i \rvert >> 0$. So by Lemma 3.9 iv) and v) of \cite{BNP}, we see that $X \in \mathcal{T}^b$. 
\\

Now we must check $X \in \mathcal{T}_c^{-}$. By assumption, $\Hom_{\mathcal{T}}(-,X)$ is a finite functor on $\mathcal{T}^c$ and so by Theorem \ref{ApproxRep}, we have a natural isomorphism $\alpha: \Hom_{\mathcal{T}}(-,X') \simeq \Hom_{\mathcal{T}}(-,X)$ of functors on $\mathcal{T}^c$ for some $X' \in \mathcal{T}^b_c$. By Lemma 7.5 ii) in \cite{Nee21c}, $X'$ admits a (strong) $\mathcal{T}^c$-approximating system in the terminology of loc.\ cit.\ and so Lemma 5.8 of loc.\ cit.\ applies and states that $\alpha$ lifts to a morphism $\hat{\alpha}: X' \to X$. Since $\Hom_{\mathcal{T}}(-,\hat{\alpha})$ is an isomorphism restricted to compacts, and $\mathcal{T}$ is compactly generated, it follows that $\Hom_{\mathcal{T}}(-,\hat{\alpha})$ is an isomorphism on all $\mathcal{T}$. By Yoneda, $\hat{\alpha}$ is an isomorphism. 

\end{proof}

\begin{remark}
In the description of $\mathcal{T}^b_c$ in Lemma \ref{keylemma}, it is equivalent to ask for $\bigoplus_i \Hom(c,\Sigma^i t )$ to be finite over $k$ for all $c$ contained in a compact generating set. 
\end{remark}

\begin{theorem} \label{tbcdesc}
Suppose $\mathcal{A}$ is an approximable DG-category such that $H^i\mathcal{A}(a,b) \in \fdmod k$ for all $a,b \in \mathcal{A}$ and $i \in \mathbb{Z}$ (e.g.\ $\mathcal{A}$ is locally finite). Then $\mathcal{D}(\mathcal{A})^b_c = \mathcal{D}_{\fd}(\mathcal{A})$.
\end{theorem}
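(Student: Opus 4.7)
The plan is to deduce the equality from Lemma \ref{keylemma} applied to $\mathcal{T} = \mathcal{D}(\mathcal{A})$, using the representables $\{\mathcal{A}(-,a) : a \in \mathcal{A}\}$ as a set of compact generators. The approximability hypothesis on $\mathcal{A}$ enters only through Lemma \ref{keylemma} itself.

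First I would verify the hypothesis of Lemma \ref{keylemma}, namely that $\Hom_{\mathcal{D}(\mathcal{A})}(X,Y) \in \fdmod k$ for all $X, Y \in \mathcal{D}^{\perf}(\mathcal{A})$. Since $\mathcal{D}^{\perf}(\mathcal{A})$ is closed under shifts, this is equivalent to asking that each $\Ext^i$ between perfect modules be finitely generated. For representables, Yoneda gives $\Ext^i_{\mathcal{D}(\mathcal{A})}(\mathcal{A}(-,a), \mathcal{A}(-,b)) = H^i\mathcal{A}(b,a)$, which is finitely generated by hypothesis. Using that $k$ is Noetherian, the property propagates to cones via the long exact sequence (subquotients of finitely generated modules remain finitely generated) and is obviously preserved under summands; hence it holds on all of $\mathcal{D}^{\perf}(\mathcal{A})$.

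Next, by Lemma \ref{keylemma} together with the remark following it, which lets us test on a compact generating set rather than every compact object,
\[
\mathcal{D}(\mathcal{A})^b_c = \left\{ M \in \mathcal{D}(\mathcal{A}) : \bigoplus_i \Hom_{\mathcal{D}(\mathcal{A})}(\mathcal{A}(-,a), \Sigma^i M) \in \fdmod k \text{ for every } a \in \mathcal{A} \right\}.
\]
The Yoneda lemma identifies each summand with $H^i M(a)$, so the condition becomes $\bigoplus_i H^i M(a) \in \fdmod k$ for every $a \in \mathcal{A}$. Since $k$ is Noetherian, a graded $k$-module is finitely generated exactly when it is concentrated in finitely many degrees with each component finitely generated, i.e.\ when $M(a) \in \mathcal{D}^b(\fdmod k)$. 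This is the defining condition of $\mathcal{D}_{\fd}(\mathcal{A})$, giving the desired equality.

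The only step that is not immediate is the first one: confirming that the finite-generation hypothesis of Lemma \ref{keylemma} propagates from the representables, where it is given, to all of $\mathcal{D}^{\perf}(\mathcal{A})$. Once that is in place, the main equality is a direct combination of Yoneda with the previous lemma.
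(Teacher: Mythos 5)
Your proposal is correct and follows essentially the same route as the paper: verify the finite-generation hypothesis of Lemma \ref{keylemma} starting from the representables and propagating to all perfect modules, then use the Yoneda identification $\Hom_{\mathcal{D}(\mathcal{A})}(\mathcal{A}(-,a),\Sigma^i M) = H^i(M(a))$ to match the right-hand side of that lemma with $\mathcal{D}_{\fd}(\mathcal{A})$. The only cosmetic slip is the transposed index in $H^i\mathcal{A}(b,a)$, which should be $H^i\mathcal{A}(a,b)$ since the right module $\mathcal{A}(-,b)$ evaluated at $a$ is $\mathcal{A}(a,b)$; this does not affect the argument.
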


\begin{proof}
The representables $\mathcal{A}(-,a)$ for $a \in \mathcal{A}$ form a set of compact generators for $\mathcal{D}(\mathcal{A})$. So for any two $M,N \in \mathcal{D}^{\perf}(\mathcal{A})$, $\Hom_{\mathcal{D}(\mathcal{A})}(M,\Sigma^i N) \in \fdmod k$ for all $i$. Since $\Hom_{\mathcal{D}(\mathcal{A})}(\mathcal{A}(-,a),\Sigma^i M) = H^i(M(a))$, the right hand side of the equality in Lemma 4.1 can be identified with $\mathcal{D}_{\fd}(\mathcal{A})$.
\end{proof}

\begin{theorem} \label{refltest}
Suppose $\mathcal{A}$ is a locally finite approximable DG-category. If there is an object $G \in \mathcal{D}_{\fd}(\mathcal{A})$ such that $\mathcal{D}(\mathcal{A}) = \overline{\langle G \rangle}_n$, then $\mathcal{A}$ is finite-reflexive.
\end{theorem}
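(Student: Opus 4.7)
The plan is to split finite-reflexivity into fully faithful and essentially surjective parts, using Example 3.5(1) for the former and Theorem 2.12(2) for the latter. Set $\mathcal{T} = \mathcal{D}(\mathcal{A})$, $\mathcal{B} = \mathcal{D}_{\fd}(\mathcal{A})$, and denote by $\Phi\colon \mathcal{D}^{\perf}(\mathcal{A}) \to \mathcal{D}_{\fd}(\mathcal{B}^{op})^{op}$ the evaluation $M \mapsto \RHom_\mathcal{A}(M,-)$. Since $\mathcal{A}$ is locally finite, Example 3.5(1) gives that $\Phi$ is fully faithful, so the task reduces to essential surjectivity.

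I would verify the hypotheses of Theorem 2.12(2) as follows. Local finiteness forces $\Hom_\mathcal{T}(X,Y) \in \fdmod k$ for $X, Y \in \mathcal{T}^c$, reducing to the representables $\mathcal{A}(-,a)$ where $\Hom_\mathcal{T}(\mathcal{A}(-,a), \Sigma^i \mathcal{A}(-,b)) = H^i \mathcal{A}(a,b)$. Theorem 4.2 then identifies $\mathcal{T}^b_c = \mathcal{B}$, and the assumption supplies $G \in \mathcal{B}$ with $\mathcal{T} = \overline{\langle G\rangle}_n$. Now, given $F \in \mathcal{D}_{\fd}(\mathcal{B}^{op})$, the assignment $B \mapsto H^0(F(B))$ is a $k$-linear homological functor $H^0 F\colon H^0\mathcal{B} \to \Mod k$; the condition $F(B) \in \mathcal{D}^b(\fdmod k)$ translates to $\bigoplus_i H^0 F(\Sigma^i B) \in \fdmod k$, so $H^0 F$ is finite. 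Theorem 2.12(2) then yields $M \in \mathcal{D}^{\perf}(\mathcal{A})$ with a natural isomorphism $H^0 F \simeq \Hom_\mathcal{T}(M,-)|_\mathcal{B}$ of homological functors.

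The main obstacle is upgrading this isomorphism of homological functors on $H^0 \mathcal{B}$ to a quasi-isomorphism $F \simeq \Phi(M)$ of DG-modules in $\mathcal{D}(\mathcal{B}^{op})$. My strategy is to use DG-Yoneda on representables: the DG-module $\Phi(\mathcal{A}(-,a))$ coincides with the representable $\mathcal{B}(\mathcal{A}(-,a),-) \in \mathcal{D}(\mathcal{B}^{op})$, giving $\Hom_{\mathcal{D}(\mathcal{B}^{op})}(\Phi(\mathcal{A}(-,a)), F) \simeq F(\mathcal{A}(-,a))$, and more generally each $\Phi(M)$ is compact in $\mathcal{D}(\mathcal{B}^{op})$ as a finite iterated extension of such representables. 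Combining this with the representability-of-morphisms extension of Theorem 2.12 discussed in the remark (which classifies natural transformations between finite homological functors by morphisms in $\mathcal{T}^c$), I would lift the natural iso $H^0 F \simeq H^0 \Phi(M)$ to a morphism $\alpha\colon \Phi(M) \to F$ in $\mathcal{D}(\mathcal{B}^{op})$. Naturality of the iso at each shift $\Sigma^{-i} B$ then yields $H^i(\Phi(M)(B)) \simeq H^i(F(B))$ for every $B \in \mathcal{B}$ and $i \in \mathbb{Z}$, forcing $\alpha$ to be a pointwise quasi-isomorphism and hence a quasi-isomorphism of DG-modules, completing essential surjectivity.
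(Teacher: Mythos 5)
Your proposal is correct and follows essentially the same route as the paper: full faithfulness from Example \ref{refleg}(1), and essential surjectivity by identifying $\mathcal{D}(\mathcal{A})^b_c = \mathcal{D}_{\fd}(\mathcal{A})$ via Theorem \ref{tbcdesc} and applying Theorem \ref{ApproxRep}(2) to the finite homological functor $H^0(F)$. The only divergence is your final lifting step, which the paper handles by citing the dual of Lemma 2.2 in \cite{KS22}; your hand-rolled version works, but is cleanest if you note that local finiteness gives $M \in \mathcal{D}^{\perf}(\mathcal{A}) \subseteq \mathcal{D}_{\fd}(\mathcal{A})$, so $\Phi(M)$ is itself corepresentable and DG-Yoneda applies directly --- the detour through iterated extensions and Neeman's representability-of-morphisms (which concerns maps between two represented functors, not maps into a general DG-module $F$) is unnecessary and would not by itself lift an $H^0$-level natural transformation to the DG level.
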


\begin{proof} 
By Theorem \ref{tbcdesc}, we have that $\mathcal{D}(\mathcal{A})^b_c = \mathcal{D}_{\fd}(\mathcal{A})$. As $\mathcal{A}$ is locally finite, by Example \ref{refleg} (1) it is enough to show the map in Definition \ref{refldefn} is essentially surjective. Suppose that $F \in \mathcal{D}_{\fd}(\mathcal{D}_{\fd}(\mathcal{A})^{op}) = \mathcal{D}_{\fd}(\mathcal{D}(\mathcal{A})^b_c)$. Then $H^0(F): \mathcal{D}(\mathcal{A})^b_c \to \Mod k$ is a homologically finite functor. Theorem \ref{ApproxRep} gives that $H^0(F) \simeq \Hom_{\mathcal{D}(\mathcal{A})}(M,-)$ for some $M \in \mathcal{D}^{\perf}(\mathcal{A})$. It follows (by e.g.\ the dual of  Lemma 2.2 in \cite{KS22}), that $F \simeq \RHom_\mathcal{A}(M,-) \in \mathcal{D}_{\fd}(\mathcal{D}_{\fd}(\mathcal{A})^{op})$. Therefore $F$ is in the image of the map $\mathcal{D}^{\perf}(\mathcal{A}) \to \mathcal{D}_{\fd}(\mathcal{D}_{\fd}(\mathcal{A}^{op}))^{op}$ as required.
\end{proof}

\label{sec3}

\section{Examples}

We apply the results of Section \ref{sec3} to three examples, proper connective DG-algebras, proper schemes and Azumaya algebras over proper schemes.

\begin{cor} \label{cor2}
If $X$ is a proper scheme over $k$, then $\mathcal{D}^b_{\coh}(X) \simeq \mathcal{D}_{\fd}(\mathcal{D}^{\perf}(\mathcal{A}))$ and $\mathcal{D}^{\perf}(X)$ is finite-reflexive.
\end{cor}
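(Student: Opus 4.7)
The plan is to verify the hypotheses of Theorem \ref{refltest} for a DG-enhancement $\mathcal{A}$ of $\mathcal{D}^{\perf}(X)$. Once that is done, Theorem \ref{tbcdesc} simultaneously identifies $\mathcal{D}_{\fd}(\mathcal{A})$ with the approximable-theoretic $\mathcal{D}(\mathcal{A})^b_c$, which I will separately identify with $\mathcal{D}^b_{\coh}(X)$, giving both halves of the corollary.

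First I would fix a DG-enhancement $\mathcal{A}$ of $\mathcal{D}^{\perf}(X)$ so that $\mathcal{D}(\mathcal{A}) \simeq \mathcal{D}_{\Qcoh}(X)$. By Example \ref{approxeg}(1), $\mathcal{D}_{\Qcoh}(X)$ is approximable with its standard $t$-structure in the preferred equivalence class, so $\mathcal{A}$ is an approximable DG-category. For local finiteness, I would use that for perfect complexes $E,F$ on $X$ proper over the Noetherian ring $k$, the complex $\RHom_X(E,F) = R\Gamma(X,\mathcal{RHom}(E,F))$ is a bounded complex with finitely generated cohomology over $k$: $\mathcal{RHom}(E,F)$ is perfect and bounded, $X$ is of finite cohomological dimension, and Grothendieck's finiteness theorem gives the coherence of each cohomology sheaf's pushforward. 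Hence $\mathcal{A}(a,b) \in \mathcal{D}^b(\fdmod k)$ for all $a,b$.

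Next, I would invoke Theorem \ref{tbcdesc} to conclude $\mathcal{D}(\mathcal{A})^b_c = \mathcal{D}_{\fd}(\mathcal{A})$, and then identify this with $\mathcal{D}^b_{\coh}(X)$. On one hand, for $X$ Noetherian, $\mathcal{D}_{\Qcoh}(X)^b_c = \mathcal{D}^b_{\coh}(X)$ by the qcqs example recalled after Definition \ref{tstructdef}, so the two descriptions agree. On the other hand, writing this out directly via the representables $\mathcal{A}(-,E) = \RHom_X(-,E)$ associated to a compact generator recovers the same category. This already yields the equivalence $\mathcal{D}^b_{\coh}(X) \simeq \mathcal{D}_{\fd}(\mathcal{D}^{\perf}(X))$ in the statement.

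The one remaining hypothesis, and the only genuinely nontrivial step, is producing an object $G \in \mathcal{D}^b_{\coh}(X)$ together with an integer $n$ for which $\mathcal{D}_{\Qcoh}(X) = \overline{\langle G \rangle}_n$. This is a strong generation statement for the big derived category, not merely for $\mathcal{D}^b_{\coh}(X)$ or $\mathcal{D}^{\perf}(X)$; it is the heart of the matter and the step I expect to be the main obstacle. I would appeal to Neeman's strong generation theorem for proper schemes over a Noetherian ring, which produces precisely such a $G$ (indeed, a single classical generator of $\mathcal{D}^b_{\coh}(X)$ with uniform control on the number of extensions needed to reach arbitrary objects after countable coproducts and summands). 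With this input in hand, Theorem \ref{refltest} applies to $\mathcal{A}$ and concludes that $\mathcal{D}^{\perf}(X)$ is finite-reflexive, completing the proof.
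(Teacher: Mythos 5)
Your proposal is correct and follows essentially the same route as the paper: approximability of $\mathcal{D}_{\Qcoh}(X)$ from Example \ref{approxeg}, local finiteness of $\mathcal{D}^{\perf}(X)$ via coherence of sheaf Ext's and finiteness of cohomology on proper schemes (this is exactly how Proposition \ref{ncproper} is proved), the identification $\mathcal{D}_{\Qcoh}(X)^b_c = \mathcal{D}^b_{\coh}(X)$ combined with Theorem \ref{tbcdesc}, and Neeman's strong generation theorem to supply the object $G$ with $\mathcal{D}_{\Qcoh}(X) = \overline{\langle G \rangle}_n$ before invoking Theorem \ref{refltest}. You correctly flag the strong generation step as the essential external input, which is precisely the citation the paper makes.
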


\begin{proof}
By Example \ref{approxeg}, $\mathcal{D}_{\Qcoh}(X)$ is approximable and $\mathcal{D}_{\Qcoh}(X)^b_c = \mathcal{D}^b_{\coh}(X)$. Over a field it was shown in \cite{Orl16} that $\mathcal{D}^{\perf}(X)$ is a locally finite DG-category. A similar proof works over a commutative Noetherian ring. It follows for example Proposition \ref{ncproper}. So Theorem \ref{tbcdesc} applies and $\mathcal{D}^b_{\coh}(X) \simeq \mathcal{D}_{\fd}(\mathcal{D}^{\perf}(\mathcal{A}))$.  By Theorem 2.3 of \cite{Nee21b} there is an object $G \in \mathcal{D}^b_{\coh}(X)$ such that $\overline{ \langle G \rangle}_n = \mathcal{D}(\mathcal{D}^{\perf}(X))$. We are done by Theorem \ref{refltest}.
\end{proof}
\begin{lemma} \label{connstrongen}
Suppose $A$ is a proper connective DG-algebra over a field $k$. Then there is an $N \geq 1$ such that $\mathcal{D}(A) = \overline{\langle H^0(A)/\rad H^0(A) \rangle}_N$
\end{lemma}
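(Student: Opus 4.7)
Write $B := H^0(A)$ and $J := \rad B$ so that the target is $\mathcal{D}(A) = \overline{\langle B/J \rangle}_N$. The strategy is to exploit the bimodule structure of $A$. Set $A^e := A \otimes_k A^{op}$, a proper connective DG-algebra with $H^0(A^e) \cong B \otimes_k B^{op}$, which we denote $B^e$. Viewing $A$ as the regular $A$-$A$-bimodule gives an object of $\mathcal{D}(A^e)$ which lies in $\mathcal{D}_{\fd}(A^e)$ by properness. The plan is first to show that $A$, as an $A^e$-module, is built from the (finite) set $\mathcal{S}$ of simple $B^e$-modules in a bounded number of triangles, then to transport this filtration to an arbitrary $M \in \mathcal{D}(A)$ by tensoring over $A$.

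For the first step we use the standard $t$-structure on $\mathcal{D}(A^e)$, whose heart is $\Mod B^e$. Since $H^*(A)$ is concentrated in a finite range $[-m, 0]$, and each $H^i(A)$ is a finite-dimensional $B^e$-module, it admits a composition series of length at most $\dim_k H^i(A)$ with simple $B^e$-module factors. Assembling these via the Postnikov tower of $A$ gives a filtration of $A$ in $\mathcal{D}(A^e)$ of length $N_0 \leq \sum_i \dim_k H^i(A)$; in particular $A \in \langle \mathcal{S} \rangle_{N_0}$. For any $M \in \mathcal{D}(A)$, tensoring this filtration over $A$ yields $M = M \otimes^L_A A \in \langle M \otimes^L_A \mathcal{S} \rangle_{N_0}$ as a right $A$-module. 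The key observation is that for any simple $B^e$-module $S$, the right $A$-action on $S$ is annihilated by $J$ (since $\rad B^e$ acts as zero on $S$ and contains $1 \otimes J^{op}$), so it factors through $B/J$. Hence $M \otimes^L_A S$, viewed as a right $A$-module, is a complex over the semisimple algebra $B/J$, and therefore a direct sum of shifts of summands of $B/J$, i.e.\ an object of $\overline{\langle B/J \rangle}_1$. Combining, $M \in \overline{\langle B/J \rangle}_{N_0}$ as desired.

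The main obstacle is the Postnikov-tower and composition-series analysis of $A$ as an $A^e$-module, which is largely standard but requires care with the bimodule structure and with the simples of $B^e$ (which over a non-perfect $k$ need not be summands of $(B/J) \otimes_k (B/J)^{op}$). An alternative route, bypassing bimodules, is to invoke the appendix by Raedschelders and Stevenson to replace $A$ by a quasi-isomorphic finite-dimensional DG-algebra; then, for a suitable model, the DG-ideal $I := \ker(A \to B/J)$ coincides with the graded Jacobson radical of $A$ and is therefore nilpotent, so that the classical $I$-adic filtration $M \supseteq IM \supseteq I^2 M \supseteq \cdots \supseteq I^N M = 0$ realizes $M$ directly as an $N$-fold extension of objects in $\overline{\langle B/J \rangle}_1$.
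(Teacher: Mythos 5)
Your argument is correct and follows essentially the same route as the paper: filter $A$, viewed as an object of $\mathcal{D}(A^e)$, into finitely many layers whose right $H^0(A)$-action factors through the semisimple quotient $H^0(A)/\rad H^0(A)$, and then apply $M\otimes_A^{\mathbb{L}}-$. The only real difference is in the choice of filtration of the cohomology: you refine the Postnikov layers $H^i(A)$ by composition series of $H^0(A)^e$-modules, using the (correct, and valid over non-perfect fields) observation that $H^0(A)\otimes_k (\rad H^0(A))^{op}$ is a nilpotent ideal so every simple is killed by $\rad H^0(A)$ on the right, whereas the paper uses the right $\rad H^0(A)$-adic filtration of $H^\ast(A)$, which makes the semisimplicity of the right action visible by construction.
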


\begin{proof}
Since $A$ is connective so is $A^e$ and since $A$ is bounded the standard $t$-structure implies there is some $N$ such that 
\[
A \in \langle H^\ast(A)\rangle_N \subseteq \mathcal{D}(A^e)
\]
There is a filtration by $H^0(A)$-bimodules,
\[
0 = H^\ast(A) J^{N'} \subseteq \dots \subseteq H^\ast(A) J \subseteq H^\ast(A)
\]
where $J = \rad H^0(A)$. Let $F_{i} = H^\ast(A)J^i/H^\ast(A)J^{i+1}$ for $i = 0,\dots,{N'-1}$ which are $H^0(A)^{op} \otimes_k H^0(A)/J$-modules. Since $H^0(A)/J$ is semi-simple, each of the $F_{i}$ are semisimple right $H^0(A)$-modules. We have that 
\[
H^\ast(A) \in \langle F_{i} \mid i =  0,\dots,{N'-1} \rangle_{N'} \subseteq \mathcal{D}(H^0(A)^{op} \otimes_k H^0(A)/J)
\]
Combining with the above and restricting along the map $A^e \to H^0(A^e) \cong H^0(A)^e \to H^0(A)^{op} \otimes_k H^0(A)/J$ then gives that $A \in \langle F_{i} \rangle_{N''} \subseteq \mathcal{D}(A^e)$ for some $N'' \geq 1$.  So there is a sequence in $\mathcal{D}(A^e)$,
\[
0 \to G_0 \to G_1 \to \dots \to G_{N''} = A
\]
with the cone of $G_j \to G_{j+1}$ lying in $\langle F_{i}  \rangle_1$. Now if $M \in \mathcal{D}(A)$, tensoring the above diagram with $M$ gives a diagram
\[
0 \to M \otimes_A^{\mathbb{L}} G_0 \to M \otimes_A^{\mathbb{L}} G_1 \to \dots \to M \otimes_A^{\mathbb{L}} G_{N''} = M
\]
with each factor lying in $\langle M \otimes_A^{\mathbb{L}} F_{i} \rangle_1$. Since each $F_{i}$ is a semisimple right $H^0(A)$ module we have that $M \otimes_A^{\mathbb{L}} F_{i} \in \mathcal{D}(H^0(A)/J)$. Therefore $M \otimes_A^{\mathbb{L}} F_{i} \in \overline{\langle H^0(A)/J \rangle}_1$. Therefore $M \in \overline{\langle H^0(A)/J \rangle}_{N''}$.
\end{proof}

\begin{remark}
Lemma \ref{connstrongen} can also be proved by taking a finite dimensional model. In \cite{Orl20}, Orlov constructed the DG-radical $\rad(A)_-$ of a finite dimensional DG-algebra $A$ and it follows from the results in loc. cit.\ that $\mathcal{D}(A) = \overline{\langle A/\rad(A)_- \rangle}_N$ where $\rad(A)_{-}^N = 0$. By Theorem \ref{thm:cyoneda}, any proper connective DG-algebra over a field is quasi-isomorphic to a finite dimensional DG-algebra.
\end{remark}

\begin{prop}
Suppose $A$ is a proper connective DG-algebra over a field $k$ then $A$ is reflexive.
\end{prop}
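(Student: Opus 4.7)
The plan is to assemble the tools from the preceding section: verify that $A$ satisfies the hypotheses of Theorem \ref{refltest}, deduce finite-reflexivity, and then promote this to reflexivity using that $k$ is a field.

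First I would check approximability and local finiteness. Since $A$ is connective, $\Hom_{\mathcal{D}(A)}(A,\Sigma^i A) = H^i(A) = 0$ for all $i>0$, so Example \ref{approxeg}(2) implies $\mathcal{D}(A)$ is approximable with the standard $t$-structure in the preferred equivalence class. Because $A$ is proper and connective over the field $k$, every $H^i(A)$ is finite-dimensional and only finitely many are non-zero, so $A(\ast,\ast)=A$ lies in $\mathcal{D}^b(\fdmod k)$; that is, $A$ is locally finite.

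Next I would produce the strong generator required by Theorem \ref{refltest}. By Lemma \ref{connstrongen} there exists $N\geq 1$ with $\mathcal{D}(A) = \overline{\langle H^0(A)/\rad H^0(A)\rangle}_N$. The module $G:=H^0(A)/\rad H^0(A)$ is a finite-dimensional $k$-vector space concentrated in degree zero, viewed as an $A$-module via $A \to H^0(A) \to H^0(A)/\rad H^0(A)$, so $G \in \mathcal{D}_{\fd}(A)$. Feeding $A$ (approximable and locally finite) together with this strong generator into Theorem \ref{refltest} gives that $A$ is finite-reflexive.

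Finally, since $k$ is a field and hence regular, the remark following Definition \ref{refldefn} identifies $\mathcal{D}^b(\fdmod k)$ with $\mathcal{D}^{\perf}(k)$ and finite-reflexivity with Morita-reflexivity, so $A$ is reflexive. There is no real obstacle here: the non-trivial content was already packaged into Lemma \ref{connstrongen} (strong generation by the semisimple quotient) and Theorem \ref{refltest} (the representability-to-reflexivity bridge); this proof is the bookkeeping that combines them.
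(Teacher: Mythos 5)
Your proof is correct and takes exactly the same route as the paper, which simply cites Theorem \ref{refltest} and Lemma \ref{connstrongen}; your version usefully spells out the verification of the hypotheses (approximability via connectivity, local finiteness via properness, and the passage from finite-reflexivity to reflexivity over a field) that the paper leaves implicit.
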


\begin{proof}
This follows from Theorem \ref{refltest} and Lemma \ref{connstrongen}.
\end{proof}

\begin{defn}
A mild non-commutative scheme $(\mathcal{A},X)$ is a scheme $X$ with a sheaf $\mathcal{A}$ of $\mathcal{O}_X$-algebras such that $\mathcal{A}$ is quasi-coherent when viewed as an $\mathcal{O}_X$-module. We say $(\mathcal{A},X)$ is quasi-compact, quasi-separated or Noetherian, if the same holds for $X$.
\end{defn}

We recall some notation for a quasi-compact quasi-separated mild non-commutative scheme $(\mathcal{A},X)$.  Let $\Mod (\mathcal{A},X)$ denote the category of $\mathcal{A}$-modules,  $\Qcoh(\mathcal{A},X)$ the quasi-coherent $\mathcal{A}$-modules, $\mathcal{D}(\mathcal{A},X)$ the derived category of $\mathcal{A}$-modules and $\mathcal{D}_{\Qcoh}(\mathcal{A},X)$ the derived category of $\mathcal{A}$-modules with quasi-coherent cohomology. The compact objects in $\mathcal{D}_{\Qcoh}(\mathcal{A},X)$ are the perfect complexes which we denote $\mathcal{D}^{\perf}(\mathcal{A},X)$. If $X$ is Noetherian and $\mathcal{A} \in \coh(X)$, then $\coh(\mathcal{A},X)$ will denote the coherent $\mathcal{A}$-modules and $\mathcal{D}^b_{\coh}(\mathcal{A},X)$ will denote the subcategory of $\mathcal{D}_{\Qcoh}(\mathcal{A},X)$ consisting bounded complexes with coherent cohomology. 
For $M,N \in \Mod(\mathcal{A},X)$, there are sheaf hom's $\mathcal{H}om_{\mathcal{A}}(M,N) \in \Mod(\mathcal{O}_X)$ and sheaf Ext's $\mathcal{E}xt^i_{\mathcal{A}}(M,N) \in \Mod(\mathcal{O}_X) $ constructed similarly to those for $\mathcal{O}_X$-modules. For any two $M,N \in \mathcal{D}(\mathcal{A},X)$ there is a derived sheaf hom $\mathcal{R}\mathcal{H}om_{\mathcal{A}}(M,N) \in \mathcal{D}(\mathcal{A},X)$ such that $H^i(X,\mathcal{R}\mathcal{H}om_{\mathcal{A}}(M,N)) = \Hom_{\mathcal{D}(\mathcal{A},X)}(M,\Sigma^i N)$. Also if $M,N \in \Mod \mathcal{O}_X$, $H^i(\mathcal{R}\mathcal{H}om_{\mathcal{A}}(M,N)) = \mathcal{E}xt^i_{\mathcal{A}}(M,N)$.

\begin{comment}

\begin{lemma}
Suppose $(\mathcal{A},X)$ is a Noetherian mild noncommutative scheme and $\mathcal{A} \in \coh(X)$. If $M,N \in \coh(\mathcal{A})$, then $\mathcal{E}xt^i_{\mathcal{A}}(M,N) \in \coh(X)$ for all $i$.
\end{lemma}

\begin{proof}
The same proof as in the commutative case shows that for any $x \in X$, $\mathcal{E}xt^i_{\mathcal{A}}(M,N)_x \simeq \mathcal{E}xt^i_{\mathcal{A}_x}(M_x,N_x)$. The assumptions imply that $\mathcal{A}_x$ is a finitely generated module over the Noetherian rings $\mathcal{O}_{X,x}$ for every $x \in X$. Therefore $\Ext^i_{\mathcal{A}_x}(M_x,N_x)$ are finitely generated $\mathcal{O}_{X,x}$ modules and $\mathcal{E}xt^i_{\mathcal{A}_x}(M_x,N_x)$ is a coherent sheaf over $\mathcal{O}_{X,x}$. Hence $\mathcal{E}xt^i_{\mathcal{A}}(M,N) \in \coh(X)$.
\end{proof}

\end{comment}

\begin{lemma} \label{ncproplem1}
Suppose $(\mathcal{A},X)$ is a mild noncommutative scheme with $X$ proper over $k$ and $\mathcal{A} \in \coh(X)$. Then for any $M,N \in \mathcal{D}^b_{\coh}(\mathcal{A})$, $\Hom_{\mathcal{D}^b_{\coh}(\mathcal{A})}(M,\Sigma^i N) \in \fdmod k$. 
\end{lemma}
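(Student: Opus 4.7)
The plan is to reduce the claim to the standard finiteness properties of coherent cohomology on a proper $k$-scheme. The bridge is the identification
\[
\Hom_{\mathcal{D}(\mathcal{A},X)}(M, \Sigma^i N) \;=\; \mathbb{H}^{i}\bigl(X,\, \mathcal{R}\mathcal{H}om_{\mathcal{A}}(M,N)\bigr)
\]
which was recalled just before the statement. So it suffices to show that $\mathbb{H}^i(X, \mathcal{R}\mathcal{H}om_{\mathcal{A}}(M,N))$ is a finitely generated $k$-module for every $i$.

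The first step is to verify that each cohomology sheaf $\mathcal{H}^q(\mathcal{R}\mathcal{H}om_{\mathcal{A}}(M,N))$ lies in $\coh(X)$. When $M$ and $N$ are single coherent $\mathcal{A}$-modules, this cohomology sheaf is $\mathcal{E}xt^q_{\mathcal{A}}(M,N)$; the classical stalk computation $\mathcal{E}xt^q_{\mathcal{A}}(M,N)_x \cong \Ext^q_{\mathcal{A}_x}(M_x,N_x)$ applies in the mild noncommutative setting because $\mathcal{A}$ is coherent on $X$, so $\mathcal{A}_x$ is a module-finite algebra over the Noetherian local ring $\mathcal{O}_{X,x}$ (hence itself left and right Noetherian), and $\Ext^q$ between finitely generated modules over a Noetherian ring is finitely generated. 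This is precisely the content of the lemma that was commented out in the excerpt; it immediately gives coherence of the stalkwise Ext. For general $M,N \in \mathcal{D}^b_{\coh}(\mathcal{A})$, a standard dévissage along the bounded truncation triangles reduces to the case of coherent modules concentrated in a single degree, so all the cohomology sheaves of $\mathcal{R}\mathcal{H}om_{\mathcal{A}}(M,N)$ are in $\coh(X)$.

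The second step is to run the hypercohomology spectral sequence
\[
E_2^{p,q} \;=\; H^p\!\bigl(X,\, \mathcal{H}^q(\mathcal{R}\mathcal{H}om_{\mathcal{A}}(M,N))\bigr) \;\Longrightarrow\; \mathbb{H}^{p+q}\bigl(X,\, \mathcal{R}\mathcal{H}om_{\mathcal{A}}(M,N)\bigr).
\]
Because $X$ is a Noetherian topological space of finite Krull dimension $d$, Grothendieck vanishing gives $E_2^{p,q}=0$ for $p>d$. Hence for fixed $n=p+q$ only finitely many values of $q$ (namely $q\in[n-d,n]$) contribute. Each contributing $E_2^{p,q}$ is finitely generated over $k$ by Grothendieck's theorem on the coherence of higher direct images under the proper structure morphism $X\to\Spec k$, applied to the coherent sheaf $\mathcal{H}^q(\mathcal{R}\mathcal{H}om_{\mathcal{A}}(M,N))$ obtained in step one. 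The abutment $\mathbb{H}^n$ is a finite iterated extension of subquotients of these finitely many finitely generated $k$-modules, so it itself lies in $\fdmod k$.

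The only real obstacle is the very first ingredient, namely confirming coherence of $\mathcal{H}^q(\mathcal{R}\mathcal{H}om_{\mathcal{A}}(M,N))$. Once that stalkwise argument is in hand everything else is formal: the hypercohomology spectral sequence and properness do all the work. I expect to either cite or quickly reprove the removed lemma and then simply combine it with Grothendieck vanishing and Serre's coherence theorem to conclude.
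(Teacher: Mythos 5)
Your argument is correct and is essentially the paper's proof: both reduce by d\'evissage to coherence of the sheaf Ext's (checked on stalks, using that $\mathcal{A}_x$ is module-finite over the Noetherian local ring $\mathcal{O}_{X,x}$) and then combine the local-to-global Ext spectral sequence with coherence of cohomology of coherent sheaves on the proper scheme $X$. The one point to watch is the appeal to Grothendieck vanishing, since a scheme proper over an arbitrary commutative Noetherian ring need not have finite Krull dimension; but that step is unnecessary, because $\mathcal{R}\mathcal{H}om_{\mathcal{A}}(M,N)$ is bounded below and $H^p(X,-)$ vanishes for $p<0$, so each anti-diagonal $p+q=n$ already contains only finitely many nonzero terms $E_2^{p,q}$.
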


\begin{proof}
By induction on the length of the complexes and using the standard $t$-structure we can reduce to the case of $M,N \in \coh(\mathcal{A})$. By a similar proof to the commutative case we have that $\mathcal{E}xt^i_{\mathcal{A}}(M,N) \in \coh(X)$ for all $i$. Hence $H^p(X,\mathcal{E}xt^i_{\mathcal{A}}(M,N)) \in \fdmod k$ (see for example \cite[Tag 02O5]{stacks} ). There is a degenerate spectral sequence (Theorem 7.3.3 in \cite{godement}) with $E_2^{p,q} = H^p(X, \mathcal{E}xt^q_{\mathcal{A}}(M,N))$ converging to $\Ext^\ast_{\mathcal{A}}(M,N)$. Therefore $\Ext^i_{\mathcal{A}}(M,N)$ is finitely generated over $k$ for all $i$.
\end{proof}

\begin{lemma} \label{ncproplem2}
Suppose $X$ is a mild noncommutative scheme and $M,N \in \mathcal{D}^{\perf}(\mathcal{A})$, then $\Hom_{\mathcal{D}}(\mathcal{A})(M,\Sigma^i N)$ vanishes for all but finitely many $i$.
\end{lemma}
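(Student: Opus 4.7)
The plan is to mirror the strategy of Lemma \ref{ncproplem1}: use the local-to-global $\Ext$ spectral sequence
\[
E_2^{p,q} = H^p(X, \mathcal{E}xt^q_{\mathcal{A}}(M,N)) \Longrightarrow \Hom_{\mathcal{D}(\mathcal{A})}(M, \Sigma^{p+q} N)
\]
and show that only finitely many $E_2^{p,q}$ are nonzero, forcing the abutment to vanish for $|p+q|$ large. Throughout I will assume $X$ is quasi-compact and quasi-separated, which is implicit in the intended applications (proper schemes and Azumaya algebras over proper schemes).

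First I would bound $q$. Since $M$ is perfect, it is Zariski-locally quasi-isomorphic to a bounded complex of finitely generated projective $\mathcal{A}$-modules. Choose a finite affine open cover $\{U_1,\dots,U_n\}$ of $X$; on each $U_j$, the sheaf $\mathcal{E}xt^q_{\mathcal{A}}(M,N)|_{U_j}$ is computed by applying $\mathcal{H}om_{\mathcal{A}}$ from a bounded projective representative of $M|_{U_j}$ into $N|_{U_j}$. Since $N$ also has bounded cohomology, this vanishes for $q$ outside a bounded interval on each $U_j$, and taking the maximum over the finite cover yields a uniform bound. The same local projective description shows that each $\mathcal{E}xt^q_{\mathcal{A}}(M,N)$ is quasi-coherent as an $\mathcal{O}_X$-module.

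Then I would bound $p$: on a quasi-compact quasi-separated scheme, sheaf cohomology of quasi-coherent modules is computed by the \v{C}ech complex with respect to a finite affine cover, so $H^p(X,F) = 0$ for $p \notin [0, n-1]$ and any quasi-coherent $F$. Combining the two bounds, only finitely many $E_2^{p,q}$ survive, which gives the desired vanishing of $\Hom_{\mathcal{D}(\mathcal{A})}(M, \Sigma^{i} N)$ for $|i|$ large. The main technical point is verifying that the local projective description of the perfect complex $M$ and the resulting quasi-coherence of $\mathcal{E}xt^q_{\mathcal{A}}(M,N)$ transfer cleanly to the noncommutative setting; this is a routine adaptation of the commutative argument, relying on the hypothesis that $\mathcal{A}$ is quasi-coherent over $\mathcal{O}_X$.
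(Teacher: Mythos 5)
Your proposal is correct and follows essentially the same route as the paper: localize to a finite cover on which the perfect complex $M$ is a bounded complex built from $\mathcal{A}$, deduce that $\mathcal{R}\mathcal{H}om_{\mathcal{A}}(M,N)$ (equivalently, the sheaves $\mathcal{E}xt^q_{\mathcal{A}}(M,N)$) is bounded, and then feed this into the local-to-global spectral sequence together with finite cohomological dimension of $X$. The only minor caveat is that computing $H^p(X,-)$ by a \v{C}ech complex for a finite affine cover strictly requires semi-separatedness rather than just quasi-separatedness, but a finite bound on quasi-coherent cohomological dimension holds for any quasi-compact quasi-separated scheme, so your argument goes through as the paper's does.
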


\begin{proof}
By definition of the perfect complexes (see \cite[3.11]{DLM24}), there is a finite open cover $U_i$ of $X$ such that $M_{U_i}$ is quasi-isomorphic to a bounded complex of finite sums of summands of $\mathcal{A}_{U_i}$. So that $M_{U_i} \in \thick_{\mathcal{D}(\mathcal{A}_{U_i})}(\mathcal{A}_{U_i})$. For the same reason as the commutative case,  $\mathcal{E}xt^j_{\mathcal{A}_{U_i}}(\mathcal{A}_{U_i},-)$ vanishes on  $\Qcoh(\mathcal{A})$ for $j \neq 0$. Since $\mathcal{R}\mathcal{H}om_{\mathcal{A}_{U_i}}(-,-)$ is triangulated in each variable, it follows that $\mathcal{R}\mathcal{H}om_{\mathcal{A}_{U_i}}(M_{U_i},N_{U_i})$ is bounded. Since $\mathcal{R}\mathcal{H}om_{\mathcal{A}}(M,N)_{U_i} = \mathcal{R}\mathcal{H}om_{\mathcal{A}_{U_i}}(M_U,N_U)$, and there are finitely many $U_i$, it follows that $\mathcal{R}\mathcal{H}om_{\mathcal{A}}(M,N)$ is bounded. There is a convergent spectral sequence as in \cite[Tag 0BKM]{stacks} with $E_2^{p,q} = H^p(X,H^q(\mathcal{R}\mathcal{H}om_{\mathcal{A}}(M,N))$ converging to $\Hom_{\mathcal{D}(\mathcal{A},X)}(M,\Sigma^\ast N)$. Therefore $\Hom_{\mathcal{D}(\mathcal{A},X)}(M,\Sigma^\ast N)$ is also bounded.
\end{proof}

\begin{prop}\label{ncproper}
Suppose $(\mathcal{A},X)$ is a mild noncommutative scheme with $X$ proper over $k$ and $\mathcal{A} \in \coh(X)$. Then $\mathcal{D}^{\perf}(\mathcal{A})$ is locally finite over $k$.
\end{prop}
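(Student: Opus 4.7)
The plan is to combine Lemmas \ref{ncproplem1} and \ref{ncproplem2}, which together supply the two properties (finite dimensionality in each cohomological degree, plus boundedness of the range of non-vanishing degrees) required for local finiteness. The bridge between them is the inclusion $\mathcal{D}^{\perf}(\mathcal{A}) \subseteq \mathcal{D}^b_{\coh}(\mathcal{A})$, which has to be established first because Lemma \ref{ncproplem1} is only stated for bounded coherent complexes.

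First I would check this inclusion. By the definition of perfectness recalled in the proof of Lemma \ref{ncproplem2}, every $M \in \mathcal{D}^{\perf}(\mathcal{A})$ admits a finite open cover $\{U_i\}$ of $X$ on which $M|_{U_i}$ is quasi-isomorphic to a bounded complex of finite sums of summands of $\mathcal{A}|_{U_i}$. Since $\mathcal{A} \in \coh(X)$ and $X$ is Noetherian (being proper over the Noetherian ring $k$), finite sums and summands of $\mathcal{A}|_{U_i}$ are coherent $\mathcal{A}|_{U_i}$-modules. Coherence of cohomology, and a uniform bound on its range, can be checked on a finite open cover, so $M$ lies in $\mathcal{D}^b_{\coh}(\mathcal{A})$.

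With this inclusion in hand, for any $M, N \in \mathcal{D}^{\perf}(\mathcal{A})$ Lemma \ref{ncproplem1} gives $\Hom_{\mathcal{D}(\mathcal{A})}(M, \Sigma^i N) \in \fdmod k$ for every $i$, while Lemma \ref{ncproplem2} shows this group vanishes for all but finitely many $i$. Translated into the natural DG enhancement, the morphism complex between $M$ and $N$ has cohomology lying in $\mathcal{D}^b(\fdmod k)$, which is precisely the local finiteness of $\mathcal{D}^{\perf}(\mathcal{A})$.

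The main obstacle is the coherence step above: one must confirm that the local model of a perfect complex as a bounded complex of summands of $\mathcal{A}^{\oplus n}|_{U_i}$ genuinely produces an object of $\mathcal{D}^b_{\coh}(\mathcal{A})$, and this is where both hypotheses—$\mathcal{A}$ coherent over $\mathcal{O}_X$ and $X$ Noetherian—enter essentially. Everything afterwards is formal given the two preceding lemmas.
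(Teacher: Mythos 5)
Your proposal is correct and follows the same route as the paper, whose proof is simply the one-line statement that the result follows from Lemmas \ref{ncproplem1} and \ref{ncproplem2}. The only difference is that you explicitly verify the inclusion $\mathcal{D}^{\perf}(\mathcal{A}) \subseteq \mathcal{D}^b_{\coh}(\mathcal{A})$ needed to invoke Lemma \ref{ncproplem1}, a step the paper leaves implicit.
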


\begin{proof}
This follows from Lemmas \ref{ncproplem1} and \ref{ncproplem2}.
\end{proof}

\begin{defn}
An Azumaya algebra over a scheme $X$ is a mild non-commutative scheme $(\mathcal{A},X)$ such that $\mathcal{A}  \in \coh(X)$ and $\mathcal{A}_x$ is a central separable algebra over $\mathcal{O}_{X,x}$ for every $x \in X$.
\end{defn}

\begin{cor}
Suppose $(\mathcal{A},X)$ is an Azumaya algebra over a scheme $X$ which is proper over $k$. Then $\mathcal{D}_{\fd}(\mathcal{D}^{\perf}(\mathcal{A},X)) \simeq \mathcal{D}^b_{\coh}(\mathcal{A},X)$ and $\mathcal{D}^{\perf}(\mathcal{A})$ is finite-reflexive. 
\end{cor}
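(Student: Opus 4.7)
The strategy mirrors Corollary \ref{cor2} in the noncommutative setting: I plan to apply Theorem \ref{refltest} to the locally finite DG-category $\mathcal{D}^{\perf}(\mathcal{A},X)$, local finiteness being exactly Proposition \ref{ncproper}. Beyond that, three ingredients need to be checked: approximability of $\mathcal{D}_{\Qcoh}(\mathcal{A},X)$ with the standard $t$-structure in the preferred equivalence class, the identification $\mathcal{D}_{\Qcoh}(\mathcal{A},X)^b_c = \mathcal{D}^b_{\coh}(\mathcal{A},X)$, and the existence of a strong generator in $\mathcal{D}^b_{\coh}(\mathcal{A},X)$.

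For the first two, the sheaf $\mathcal{A}$ itself is a compact generator of $\mathcal{D}_{\Qcoh}(\mathcal{A},X)$, and since $\mathcal{A} \in \coh(X)$ the sheaf-theoretic manipulations underlying Example \ref{approxeg}(1) and the description of $(-)^b_c$ as pseudocoherent-bounded complexes transfer essentially verbatim with $\mathcal{O}_X$ replaced by $\mathcal{A}$. Combining Proposition \ref{ncproper} with Theorem \ref{tbcdesc} and the Morita identification $\mathcal{D}(\mathcal{D}^{\perf}(\mathcal{A},X)) \simeq \mathcal{D}_{\Qcoh}(\mathcal{A},X)$ then yields $\mathcal{D}_{\fd}(\mathcal{D}^{\perf}(\mathcal{A},X)) \simeq \mathcal{D}^b_{\coh}(\mathcal{A},X)$, which is the first half of the conclusion.

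For the second half I would produce a strong generator. Let $H \in \mathcal{D}^b_{\coh}(X)$ be Neeman's strong generator of $\mathcal{D}_{\Qcoh}(X)$ from Theorem 2.3 of \cite{Nee21b} and set $G := H \otimes^{\mathbb{L}}_{\mathcal{O}_X} \mathcal{A}$, which lies in $\mathcal{D}^b_{\coh}(\mathcal{A},X)$ because $\mathcal{A} \in \coh(X)$. For any $M \in \mathcal{D}_{\Qcoh}(\mathcal{A},X)$, forgetting to $\mathcal{D}_{\Qcoh}(X)$ places $M \in \overline{\langle H \rangle}_n$; applying the triangulated, coproduct-preserving functor $- \otimes^{\mathbb{L}}_{\mathcal{O}_X} \mathcal{A}$ to the corresponding generating tower gives $M \otimes^{\mathbb{L}}_{\mathcal{O}_X} \mathcal{A} \in \overline{\langle G \rangle}_n$ inside $\mathcal{D}(\mathcal{A},X)$. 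Because $\mathcal{A}$ is Azumaya it is separable over $\mathcal{O}_X$, so the multiplication map $\mathcal{A} \otimes^{\mathbb{L}}_{\mathcal{O}_X} \mathcal{A} \to \mathcal{A}$ admits an $\mathcal{A}^e$-linear splitting; tensoring with $M$ over $\mathcal{A}$ produces an $\mathcal{A}$-linear splitting of the counit $M \otimes^{\mathbb{L}}_{\mathcal{O}_X} \mathcal{A} \to M$, so $M$ is a summand and hence $M \in \overline{\langle G \rangle}_n$. Theorem \ref{refltest} then delivers finite-reflexivity of $\mathcal{D}^{\perf}(\mathcal{A},X)$.

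The main obstacle I anticipate is the first step: porting approximability and the $(-)^b_c$-description from commutative schemes to mild noncommutative schemes with coherent structure sheaf requires re-examining the sheaf-theoretic arguments of \cite{Nee21a, CNS24}, even though they go through without essential change. Once that is in place, the separability splitting in the strong-generation step is the only place where the Azumaya hypothesis is used essentially, and it is genuinely needed: a general mild noncommutative $(\mathcal{A},X)$ would not admit such a splitting, so the simple reduction to Neeman's commutative strong generation theorem would not work.
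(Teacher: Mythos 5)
Your proof has the same skeleton as the paper's: local finiteness from Proposition \ref{ncproper}, approximability of $\mathcal{D}_{\Qcoh}(\mathcal{A},X)$, the identification $\mathcal{D}_{\Qcoh}(\mathcal{A},X)^b_c = \mathcal{D}^b_{\coh}(\mathcal{A},X)$, a strong generator in $\mathcal{D}^b_{\coh}(\mathcal{A},X)$, and then Theorem \ref{refltest}. The paper outsources the middle three ingredients entirely to the literature: approximability and the $(-)^b_c$-description are Propositions 4.1 and 4.2 of \cite{DLM24}, and strong generation is extracted from the proof of Corollary 6.2 in \cite{DLM24b}. Your treatment of strong generation is a genuine alternative and it is correct: since $\mathcal{A}$ is Azumaya it is locally free over $\mathcal{O}_X$ and the multiplication $\mathcal{A}\otimes_{\mathcal{O}_X}\mathcal{A}\to\mathcal{A}$ splits $\mathcal{A}^e$-linearly, so every $M$ is a summand of $M\otimes_{\mathcal{O}_X}\mathcal{A}$; combined with Neeman's Theorem 2.3 of \cite{Nee21b} and the fact that $-\otimes_{\mathcal{O}_X}\mathcal{A}$ is triangulated and coproduct-preserving (hence carries $\overline{\langle H\rangle}_n$ into $\overline{\langle H\otimes_{\mathcal{O}_X}\mathcal{A}\rangle}_n$, which is closed under summands), this gives $\mathcal{D}_{\Qcoh}(\mathcal{A},X)=\overline{\langle H\otimes_{\mathcal{O}_X}\mathcal{A}\rangle}_n$. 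This is self-contained where the paper relies on a citation, and it isolates exactly where the Azumaya hypothesis enters.

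There is, however, one concrete error in the part you chose to redo rather than cite: you assert that ``the sheaf $\mathcal{A}$ itself is a compact generator of $\mathcal{D}_{\Qcoh}(\mathcal{A},X)$.'' This fails for non-affine $X$; already for $\mathcal{A}=\mathcal{O}_X$ on $X=\mathbb{P}^1$ the structure sheaf is compact but does not generate $\mathcal{D}_{\Qcoh}(\mathbb{P}^1)$. The fix is the same separability trick you use later: if $P$ is a compact generator of $\mathcal{D}_{\Qcoh}(X)$ (which exists for quasi-compact quasi-separated $X$ by Bondal--Van den Bergh), then $P\otimes^{\mathbb{L}}_{\mathcal{O}_X}\mathcal{A}$ is a compact generator of $\mathcal{D}_{\Qcoh}(\mathcal{A},X)$, because any $M$ right-orthogonal to all its shifts is a summand of $M\otimes_{\mathcal{O}_X}\mathcal{A}$ and hence vanishes. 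With that correction, and granting that the approximability and pseudocoherence arguments of \cite{Nee21a, CNS24} do transfer (which is precisely what \cite{DLM24} verifies), your argument goes through.
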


\begin{proof} 

By Proposition 4.1 of \cite{DLM24} and its proof, $\mathcal{D}_{\Qcoh}(\mathcal{A},X)$ is approximable and by Proposition \ref{ncproper}, $\mathcal{D}^{\perf}(\mathcal{A},X)$ is locally finite. By Proposition 4.2 of \cite{DLM24} $\mathcal{D}^b_{\coh}(\mathcal{A},X) = \mathcal{D}_{\Qcoh}(\mathcal{A},X)^b_c$ and by the proof of Corollary 6.2 in \cite{DLM24b}, there is a $G \in \mathcal{D}^b_{\coh}(\mathcal{A},X)$ such that $\mathcal{D}_{\Qcoh}(\mathcal{A},X) = \overline{\langle G \rangle}_n$. So Theorem \ref{refltest} implies $\mathcal{D}^{\perf}(\mathcal{A},X)$ is finite reflexive. 
\end{proof}

\begin{remark}
By \cite{DLM24},  $\mathcal{D}_{\fd}(\mathcal{D}^{\perf}(\mathcal{A},X)) \simeq \mathcal{D}^b_{\coh}(\mathcal{A},X)$ holds more generally for any Noether scheme.
\end{remark}

\appendix

%\author{Theo Raedschelders}
%\address{Theo Raedschelders, Departement Wiskunde,
%Vrije Universiteit Brussel,
%Pleinlaan 2,
%B-1050 Elsene
%}
%\email{Theo.Raedschelders@vub.be}
%\urladdr{https://www.theoraedschelders.com/}
%
%\author{Greg Stevenson}
%\address{Greg Stevenson, Aarhus University, Department of Mathematics, Ny Munkegade 118, bldg. 1530
%DK-8000 Aarhus C, Denmark
%}
%\email{greg@math.au.dk}
%\urladdr{https://sites.google.com/view/gregstevenson}
%
%
%
\section{Finite dimensional models via DG-Yoneda}

\label{appendix}

\begin{center}
By Theo Raedschelders and Greg Stevenson
\end{center}

\vspace{3ex}

In \cite{RS20} it was shown that a proper connective DG-algebra over an algebraically closed field is quasi-isomorphic to a finite dimensional DG-algebra (see Corollary~3.12 in particular). This hypothesis is stronger than necessary, but was chosen to simplify things. In this appendix we give a short and direct proof that, over an arbitrary field, such DG-algebras have finite dimensional models. This argument had previously been in private circulation.

Let $A$ be an $A_\infty$-algebra. One can make the category of $A$-modules into a DG-category (we follow the discussion in \cite{ELO} and full details can be found in \cite{LH}). Let us concentrate on the case we care about, namely the DG endomorphism ring of $A$. This is given, as a graded vector space, by
\begin{align*}
\Hom_{\Mod_\infty A}(A,A) = \prod_{m\geq 1} \Hom((\Sigma A)^{\otimes m}, \Sigma A) = \prod_{m\geq 1}\Sigma^{-m+1} \Hom(A^{\otimes m}, A),
\end{align*}
where the undecorated $\Hom$'s are graded vector spaces of graded maps. The differential is given on $\phi = (\phi_l \mid l\geq 1) \in \Hom_{\Mod_\infty A}^n(A,A)$ by
\begin{displaymath}
(d\phi)_l = \sum_{1\leq i \leq l} m_{l-i+1}(1^{\otimes l-i}\otimes \phi_i) - (-1)^n \sum_{r+s+t=l} \phi_{r+1+t}(1^{\otimes r} \otimes m_s \otimes 1^{\otimes t}).
\end{displaymath}
The multiplication is given by
\begin{displaymath}
(\psi \circ \phi)_l = \sum_{1\leq i \leq l} \psi_{l-i+1}(1^{\otimes l-i} \otimes \phi_i).
\end{displaymath}

Now let us suppose that $A$ is minimal and proper, concentrated in the interval $[a,b]$, so $a\leq 0$ and $b\geq 0$. Then $A^{\otimes m}$ is concentrated in the interval $[ma, mb]$ and so $\Hom(A^{\otimes m}, A)$ is concentrated in the interval $[a-mb, b-ma]$. Thus, using the convenient notational abhorrence of replacing a complex by the interval in which it lives, we have
\begin{displaymath}
\Hom_{\Mod_\infty A}(A,A) = \prod_{m\geq 1} [a + m(1-b) -1, b + m(1-a) - 1].
\end{displaymath}
We see that the upper bound approaches infinity and that the lower bound tends to $-\infty$ provided $b\geq 2$. One can easily prove the following two lemmas.

\begin{lemma}\label{lem:cdegrees}
Suppose that $A$ is connective i.e.\ $b=0$. Then the complex $\Hom_{\Mod_\infty A}(A,A)$ is locally finite and concentrated in degrees $\geq a$.
\end{lemma}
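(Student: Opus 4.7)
The plan is to read off both claims directly from the description of the graded Hom complex already recorded just above the lemma, namely
\[
\Hom_{\Mod_\infty A}(A,A) = \prod_{m\geq 1} [a + m(1-b) - 1, b + m(1-a) - 1],
\]
specialised to $b = 0$. With $b = 0$ the $m$-th factor is concentrated in degrees $[a + m - 1, m(1-a) - 1]$.

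For the degree bound, I would simply note that $a + m - 1 \geq a$ for every $m \geq 1$, so every factor of the product, and hence the product itself, is concentrated in degrees $\geq a$.

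For local finiteness, I would fix an integer $n$ and determine which $m$ can contribute in degree $n$. The $m$-th factor has a nonzero piece in degree $n$ only if $a + m - 1 \leq n$, i.e.\ only for $m \leq n - a + 1$. Hence the degree-$n$ slice of the infinite product reduces to a finite direct sum of pieces of the form $\Hom(A^{\otimes m}, A)^{n + m - 1}$. Each such piece is finite-dimensional: by properness every $A^j$ is finite-dimensional, $A^{\otimes m}$ is supported in the bounded interval $[ma, 0]$, and a graded map of fixed internal degree from a bounded, degreewise finite-dimensional graded vector space to a bounded, degreewise finite-dimensional graded vector space is given by finitely many finite-dimensional matrices.

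The step really doing the work is the second one: it is essential that the lower endpoint $a + m - 1$ of the $m$-th interval tends to $+\infty$ with $m$, which is what forces the infinite product to behave like a direct sum in each fixed degree. This is precisely where connectivity ($b = 0$) enters; if instead $b \geq 1$ then the lower endpoint $a + m(1-b) - 1$ is non-increasing in $m$, and both conclusions of the lemma fail, as the surrounding discussion of the general bounds already anticipates.
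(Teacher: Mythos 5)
Your proof is correct and follows exactly the route the paper intends: the lemma is left as an easy consequence of the displayed computation $\Hom_{\Mod_\infty A}(A,A) = \prod_{m\geq 1} [a + m(1-b) -1,\, b + m(1-a) - 1]$, and you read off both claims by specialising to $b=0$, noting the lower endpoints $a+m-1$ are bounded below by $a$ and tend to $+\infty$, so each degree receives contributions from only finitely many factors, each finite dimensional since the minimal model has finite dimensional underlying vector space. Nothing is missing.
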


\begin{lemma}
If $b\geq 1$ then $\Hom_{\Mod_\infty A}(A,A)$ is not locally finite and if $b\geq 2$ it is unbounded. 
\end{lemma}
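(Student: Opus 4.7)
My plan is to exploit the explicit product description $\Hom_{\Mod_\infty A}(A,A) = \prod_{m \geq 1} \Hom((\Sigma A)^{\otimes m}, \Sigma A)$ together with the support interval $[a+m(1-b)-1, b+m(1-a)-1]$ derived in the text. Because $A$ is minimal with $A^a, A^b \neq 0$, both endpoints of this interval are realised by genuine nonzero elements: the ``top-to-bottom'' map $(A^b)^{\otimes m} \to A^a$ gives a nonzero degree-$(a + m(1-b) - 1)$ element of the $m$th factor, while the ``bottom-to-top'' map $(A^a)^{\otimes m} \to A^b$ gives a nonzero degree-$(b + m(1-a) - 1)$ element.

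For unboundedness when $b \geq 2$, this observation alone suffices. Since $1 - b \leq -1$, the lower endpoint drifts to $-\infty$ with $m$, and since $1 - a \geq 1$, the upper endpoint drifts to $+\infty$; the two families of extremal maps therefore produce nonzero elements of $\Hom_{\Mod_\infty A}(A,A)$ at arbitrarily negative and arbitrarily positive degrees, which is unboundedness in both directions.

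For non-local-finiteness when $b = 1$, the lower endpoint $a+m(1-b)-1 = a-1$ is independent of $m$, so the top-to-bottom maps contribute a nonzero element at the single degree $a-1$ for every $m \geq 1$; since $\Hom_{\Mod_\infty A}(A,A)$ is a product, its degree-$(a-1)$ part is an infinite product of nonzero vector spaces, hence infinite-dimensional. For $b \geq 2$ both endpoints drift, so this direct trick fails; instead I fix the target degree $0$ and look for $p, q \geq 0$ with $p + q = m$ such that the summand $(A^a)^{\otimes p} \otimes (A^b)^{\otimes q}$ of $(\Sigma A)^{\otimes m}$ sits in degree $b - 1$ where $\Sigma A$ is guaranteed nonzero. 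The constraint $p(a-1) + q(b-1) = b - 1$ rearranges to $p(1-a) = (q-1)(b-1)$, and because $1-a, b-1 \geq 1$ this admits an arithmetic progression of non-negative integer solutions indexed by $s \geq 0$, producing infinitely many values $m = 1 + s(b-a)/\gcd(1-a, b-1)$ for which a nonzero degree-$0$ map $(\Sigma A)^{\otimes m} \to \Sigma A$ exists. Hence the degree-$0$ part of the product has infinitely many nonzero factors, so $\Hom_{\Mod_\infty A}(A,A)$ is not locally finite.

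The main obstacle I anticipate is precisely this $b \geq 2$ subcase of non-local-finiteness: both endpoints of the $m$th factor's support drift with $m$, so no single degree is automatically hit by every $m$, and one must locate a degree hit by infinitely many $m$. The lattice argument above, using mixed tensors of $A^a$ and $A^b$ to realise a pre-chosen target degree of $\Sigma A$, is the essential ingredient that resolves this.
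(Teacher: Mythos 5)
Your proof is correct. Note that the paper offers no argument here at all beyond the preceding computation of the support intervals $[a+m(1-b)-1,\, b+m(1-a)-1]$ and the remark that ``one can easily prove'' the lemma, so there is nothing to compare line by line; what you have done is supply the missing details, and you have done so carefully. Two points are worth highlighting. First, you correctly insist that the endpoints of each interval are actually \emph{attained}, via the maps $(A^b)^{\otimes m}\to A^a$ and $(A^a)^{\otimes m}\to A^b$ (using that $[a,b]$ is the minimal interval of support, so $A^a,A^b\neq 0$); this is what upgrades ``the support interval drifts to $-\infty$'' into genuine unboundedness for $b\geq 2$. Second, and more substantially, you identify the one point where the interval computation alone is insufficient: for $b\geq 2$ both endpoints move with $m$, and although every interval contains $0$, the interior of a support interval need not be populated (e.g.\ if $A=A^a\oplus A^b$ with nothing in between). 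Your arithmetic-progression argument, solving $p(1-a)=(q-1)(b-1)$ to produce infinitely many $m=1+s(b-a)/\gcd(1-a,b-1)$ whose factors are nonzero in the single degree $0$, is exactly the kind of argument needed to close this gap, and it checks out. The $b=1$ case via the stationary lower endpoint $a-1$ is also fine (and is in fact the degenerate case of the same lattice equation). In short: the proposal is a complete and correct proof of a statement the paper leaves as an exercise, and it resolves the only genuinely subtle point rather than glossing over it.
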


In the former case, that is when $A$ is connective, we can just truncate this complex to obtain a good DG-model.

\begin{theorem}\label{thm:cyoneda}
If $A$ is proper and connective then it has a finite dimensional DG-model. In particular, any proper connective DG-algebra has a finite dimensional model.
\end{theorem}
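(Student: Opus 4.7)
The idea is to realize $A$ as quasi-isomorphic to a suitable truncation of its own $A_\infty$-endomorphism DG algebra, which by Lemma \ref{lem:cdegrees} is tame enough in the connective setting. First I would pass to a minimal $A_\infty$-model of $A$ via Kadeishvili's theorem; since $A$ is proper, this minimal model has underlying graded vector space $H^*(A)$, which is finite dimensional and concentrated in some interval $[a, 0]$ with $a \leq 0$. One may therefore assume $A$ is itself minimal, proper, and connective, and in particular satisfies the hypothesis of Lemma \ref{lem:cdegrees}.

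Next, form the DG algebra $E := \Hom_{\Mod_\infty A}(A, A)$ as constructed in the appendix. The $A_\infty$ Yoneda embedding supplies a quasi-isomorphism $A \to E$: on underlying complexes this is the chain map $a \mapsto (\phi^a_l)_{l \geq 1}$ with $\phi^a_l(x_1, \ldots, x_l) = \pm\, m_{l+1}(a, x_1, \ldots, x_l)$, which is a quasi-isomorphism because $E$ computes $\RHom_A(A, A) \simeq A$. By Lemma \ref{lem:cdegrees}, $E$ is locally finite and concentrated in degrees $\geq a$, and its cohomology $H^*(E) \cong H^*(A)$ sits in the bounded interval $[a, 0]$.

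The plan is then to pass to the smart truncation $\tau^{\leq 0}(E)$, defined by $(\tau^{\leq 0} E)^n = E^n$ for $n < 0$, $(\tau^{\leq 0} E)^0 = \ker(d^0)$, and zero in positive degrees. The inclusion $\tau^{\leq 0}(E) \hookrightarrow E$ is a quasi-isomorphism because $E$ is cohomologically non-positive. A short check with the graded Leibniz rule shows that $\tau^{\leq 0}(E)$ is a sub-DG-algebra of $E$: any product involving a factor in strictly negative degree lies in non-positive degree, and for $\psi, \phi \in \ker(d^0) \subseteq E^0$ one has $d(\psi \circ \phi) = d\psi \circ \phi \pm \psi \circ d\phi = 0$. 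Since $\tau^{\leq 0}(E)$ lives in the finite interval $[a, 0]$ with each graded piece finite dimensional, it is a finite dimensional DG algebra quasi-isomorphic to $E$, and hence a finite dimensional DG model for $A$.

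The main technical point I expect to require care is making the $A_\infty$ Yoneda map rigorous enough to yield an honest equivalence of $A_\infty$-algebras rather than merely of underlying chain complexes, so that the truncation of $E$ legitimately counts as a DG model of $A$; once that is in place, the rest is a routine truncation argument and an application of Lemma \ref{lem:cdegrees}.
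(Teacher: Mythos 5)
Your proposal is correct and follows essentially the same route as the paper's own proof: pass to a minimal (hence finite dimensional) model, use the $A_\infty$-Yoneda quasi-isomorphism $A \to \Hom_{\Mod_\infty A}(A,A)$ together with Lemma~\ref{lem:cdegrees}, and then truncate above to land in a finite dimensional DG-algebra. Your explicit verification that the smart truncation is a sub-DG-algebra is a detail the paper leaves implicit, but the argument is the same.
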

\begin{proof}
By replacing $A$ by a minimal model if necessary, we can assume it has finite dimensional underlying vector space. We have an $A_\infty$-quasi-isomorphism $A\to \Hom_{\Mod_\infty A}(A,A)$. By Lemma~\ref{lem:cdegrees} the DG-algebra $\Hom_{\Mod_\infty A}(A,A)$ is locally finite and bounded below. Given the quasi-isomorphism it is connective and so truncating above we get a finite dimensional DG-algebra which is quasi-isomorphic to $A$. The final statement follows from the first by taking a minimal $A_\infty$-model.
\end{proof}

\bibliographystyle{alpha}
\bibliography{biblio}

\newcommand{\etalchar}[1]{$^{#1}$}
\begin{thebibliography}{BCR{\etalchar{+}}24}

\bibitem[AJS03]{AJS03}
Leovigildo Alonso, Ana Jeremias, and Maria Jose~Souto Salorio.
\newblock Construction of t-structures and equivalences of derived categories.
\newblock {\em Trans. Amer. Math. Soc.}, pages 2523--2543, 2003.

\bibitem[BCR{\etalchar{+}}24]{BCRPZ24}
Rudradip Biswas, Hongxing Chen, Kabeer~Manali Rahul, Chris~J. Parker, and Junhua Zheng.
\newblock Bounded $t$-structures, finitistic dimensions, and singularity categories of triangulated categories.
\newblock {\em arXiv preprint: 2401.00130}, 2024.

\bibitem[BNP13]{BNP13}
D.~{Ben-Zvi}, D.~Nadler, and A.~Preygel.
\newblock Integral transforms for coherent sheaves.
\newblock {\em Journal of the European Mathematical Society}, 19, 12 2013.

\bibitem[BNP23]{BNP}
Jesse Burke, Amnon Neeman, and Bregje Pauwels.
\newblock Gluing approximable triangulated categories.
\newblock {\em Forum of Mathematics, Sigma}, 11, 2023.

\bibitem[CNS24]{CNS24}
Alberto Canonaco, Amnon Neeman, and Paolo Stellari.
\newblock Weakly approximable triangulated categories and enhancements: a survey.
\newblock {\em arXiv preprint: 2407.05946}, 2024.

\bibitem[DLR24a]{DLM24}
Timothy~De Deyn, Pat Lank, and Kabeer~Manali Rahul.
\newblock Approximability and {R}ouquier dimension for noncommutative algebras over schemes.
\newblock {\em arXiv preprint: 2408.04561}, 2024.

\bibitem[DLR24b]{DLM24b}
Timothy~De Deyn, Pat Lank, and Kabeer~Manali Rahul.
\newblock Descent and generation for noncommutative coherent algebras over schemes.
\newblock {\em arXiv preprint: 2410.01785}, 2024.

\bibitem[ELO10]{ELO}
Alexander~I. Efimov, Valery~A. Lunts, and Dmitri~O. Orlov.
\newblock Deformation theory of objects in homotopy and derived categories. {II}. {P}ro-representability of the deformation functor.
\newblock {\em Adv. Math.}, 224(1):45--102, 2010.

\bibitem[God58]{godement}
R.~Godement.
\newblock Topologie alg \'e brique et theorie des faisceaux.
\newblock {\em Actualites Sci. Ind}, 1252, 1958.

\bibitem[Goo24]{Goo24}
I.~Goodbody.
\newblock Reflexivity and {H}ochschild cohomology.
\newblock {\em arXiv preprint: 2403.09299}, 2024.

\bibitem[Kel06]{Kel06}
B.~Keller.
\newblock On differential graded categories.
\newblock {\em International Congress of Mathematicians. Eur. Math. Soc.}, pages 151--190, 2006.

\bibitem[KS22]{KS22}
A.~Kuznetsov and E.~Shinder.
\newblock Homologically finite-dimensional objects in triangulated categories.
\newblock {\em arXiv: 2211.09418}, 2022.

\bibitem[LH02]{LH}
K.~Lefevre-Hasegawa.
\newblock {\em Sur les ${A}_{\infty}$-cat{\'e}gories}.
\newblock PhD thesis, Ph. D. thesis, Universit{\'e} Paris 7, UFR de Math{\'e}matiques, 2003, math. CT/0310337, 2002.

\bibitem[Nee18]{Nee18b}
Amnon Neeman.
\newblock The category $(\mathcal{ T}^c)^{\text{op}}$ as functors on $\mathcal {T}^b_c$.
\newblock {\em arXiv preprint: 1806.05777}, 2018.

\bibitem[Nee21a]{Nee21a}
Amnon Neeman.
\newblock Approximable triangulated categories.
\newblock {\em Representations of Algebras, Geometry and Physics, Contemp. Math.}, pages 111--155, 2021.

\bibitem[Nee21b]{Nee21b}
Amnon Neeman.
\newblock Strong generators in ${{D}^{perf}(X)}$ and ${{D}^b_{coh}(X)}$.
\newblock {\em Ann. of Math.}, pages 689--732, 2021.

\bibitem[Nee21c]{Nee21c}
Amnon Neeman.
\newblock Triangulated categories with a single compact generator and a brown representability theorem.
\newblock {\em arXiv preprint: 1804.02240}, 2021.

\bibitem[Orl16]{Orl16}
Dmitri Orlov.
\newblock Smooth and proper noncommutative schemes and gluing of dg categories.
\newblock {\em Advances in Mathematics}, 302:59--105, 2016.

\bibitem[Orl20]{Orl20}
D.~Orlov.
\newblock Finite-dimensional differential graded algebras and their geometric realizations.
\newblock {\em Advances in Mathematics}, 366:107096, jun 2020.

\bibitem[RS22]{RS20}
T.~Raedschelders and G.~Stevenson.
\newblock Proper connective differential graded algebras and their geometric realizations.
\newblock {\em European Journal of Mathematics}, 8:574–598, 2022.

\bibitem[{Sta}18]{stacks}
The {Stacks Project Authors}.
\newblock \textit{Stacks Project}.
\newblock \url{https://stacks.math.columbia.edu}, 2018.

\bibitem[To{\"{e}}07]{toe06}
B.~To{\"{e}}n.
\newblock {The homotopy theory of dg -categories and derived Morita theory.}
\newblock {\em {Invent. Math.}}, 167(3):615--667, March 2007.

\end{thebibliography}

\end{document}